\newcommand{\one}{\mathbf{1}}
\newtheorem{theorem}{Theorem}
\newtheorem{lemma}[theorem]{Lemma}
\let\plainqed\qedsymbol
\newcommand{\claimqed}{$\lrcorner$}
\newcommand{{\LPC}}{\textbf{LPC}}
\newcommand{\x}{\mathbf{x}}
\newcommand{\y}{\mathbf{y}}
\newcommand{\z}{\mathbf{z}}
\newcommand{\D}{\mathbf{D}}
\newcommand{\0}{\mathbf{0}}
\newcommand{\bb}{\mathbf{b}}
\newcommand{\RBM}{\operatorname{RBM}}
\newtheorem{thm}{Theorem}
\newtheorem{rem}[thm]{Remark}
\begin{document}

\begin{frontmatter}

\title{Parameter and dimension dependence of convergence rates to stationarity for reflecting Brownian motions}
\runtitle{Convergence rates to stationarity for RBM}
\author{\fnms{Sayan} \snm{Banerjee}\ead[label=e1]{sayan@email.unc.edu}}
\and
\author{\fnms{Amarjit} \snm{Budhiraja}\ead[label=e2]{amarjit@unc.edu}}
\affiliation{University of North Carolina, Chapel Hill}
\runauthor{Banerjee and Budhiraja}
\address{Department of Statistics \\and Operations Research\\
353 Hanes Hall CB \#3260\\
University of North Carolina\\
Chapel Hill, NC 27599\\
\printead{e1}\\
\printead{e2}}
% Activate to display a given date or no date

%\maketitle
\begin{abstract}
We obtain rates of convergence to stationarity in $L^1$-Wasserstein distance for a $d$-dimensional reflected Brownian motion (RBM) in the nonnegative orthant that are explicit in the  dimension and the system parameters. The results are then applied to a class of RBMs considered in \cite{BC2016} and to rank-based diffusions including the Atlas model. 
In both cases, we obtain explicit rates and bounds on relaxation times.
In the first case we improve the relaxation time estimates of $O(d^4(\log d)^2)$ obtained in \cite{BC2016} to $O((\log d)^2)$. In the latter case, we give the first results on explicit parameter and dimension dependent rates under the Wasserstein distance. The proofs do not require   an explicit form for the stationary measure or reversibility of the process with respect to this measure, and cover settings where these properties are not available. In the special case of the standard Atlas model \cite{Fern}, we obtain a bound on the relaxation time of $O(d^6(\log d)^2)$.
\end{abstract}

\begin{keyword}[class=MSC]
\kwd[Primary]{ 60J60}
\kwd{60H10}
\kwd[; secondary]{60J55}
\kwd{60K25}
\end{keyword}
\begin{keyword}
\kwd{Reflected Brownian motion}
\kwd{local time}
\kwd{coupling}
\kwd{Wasserstein distance}
\kwd{relaxation time}
\kwd{ heavy traffic}
\kwd{Atlas model}
\end{keyword}

\end{frontmatter}

\section{Introduction}
A $d$-dimensional obliquely reflected Brownian motion with drift in the nonnegative orthant plays a central role in Queuing Theory where it arises as a diffusion limit of scaled queue length processes  when the system is in the {\em heavy traffic regime} (namely the arrival rate and the service rate are approximately equal) \cite{reiman1977queueing, harrison1981reflected,halfin1981heavy,williams1998diffusion,bramson2001heavy}. Such a process is also used to describe the behavior of rank-based diffusions, namely a system of particles whose trajectories are given by Brownian motions with drift, where the drift and diffusion coefficients of a given particle at any given time depend on its relative rank in the system at that time. These models appear frequently in mathematical finance, eg. the Atlas model \cite{PP,IPS,BFK}. There has been extensive work in the study of stability of such reflected Brownian motions (RBM) that gives explicit sufficient conditions for positive recurrence for the RBM and the corresponding queuing systems \cite{harrison1987brownian, dupuis1994lyapunov,atar2001positive, dai1995positive,stolyar1995stability}. In this work, we obtain explicit exponential convergence rates (in Wasserstein distance) to equilibrium for multidimensional reflected Brownian motion (RBM) under a key stability condition identified in \cite{harrison1987brownian} (see Assumption (A2)). 
This assumption is known to be `almost necessary' for stability (see Remark \ref{rem:rem1} for a precise statement).  
The convergence rates  obtained in this work are {\em explicit (up to some universal constants) in the dimension and system parameters}. The system parameters are given by the drift vector, the covariance matrix of the Brownian motion, and the reflection matrix. Stationary distributions of RBM are rarely explicit and the convergence rates of the form obtained in this work provide important information for the construction of numerical schemes that sample from these stationary distributions.

There has been some prior work in this area. Exponential ergodicity was proved in \cite{BL07} for  semimartingale reflecting Brownian motions under the stability condition of \cite{dupuis1994lyapunov}.
This class includes  RBM of the form considered in this work. The paper \cite{BL07}  also established exponential ergodicity of certain reflected diffusions with 
state dependent drift and diffusion coefficients. 
The key ingredient in the proof was the construction of a suitable Lyapunov function along with establishing a minorization condition on a sufficiently large compact set (referred to as a `small set'). The Lyapunov function provides good control on the exponential moments of the return times to the small set while the minorization condition implies the existence of abstract couplings of two copies of the process (via construction of `pseudo-atoms' as described in Chapter 5 of \cite{meyn2012markov}) which have a positive chance of coalescing inside the small set. Together, they furnish exponential rates of convergence (in a weighted total variation distance). However, due to the somewhat implicit treatment of the process inside the small set, the rates obtained by this method  shed little light on how they qualitatively  depend on the system parameters or the state dimension. 
The paper \cite{IPS} obtained explicit convergence rates for a class of reversible rank-based diffusions with explicit stationary measures using Dirichlet form techniques (which crucially use reversibility). 
See also the discussion in Section \ref{sec:atlmod}. The  convergence considered in \cite{IPS} corresponds to that of time averages of bounded functionals of the state process to the corresponding stationary values in probability (see Theorem 1 of \cite{IPS}), which is considerably weaker than the $L^1$-Wasserstein distance considered in the current work.
The setting of one-dimensional RBM was considered in \cite{wang2014measuring} where (among other results) an estimate on the spectral gap  was provided as a function of the drift and the diffusion coefficient. In a recent work, \cite{BC2016} obtained dimension dependent bounds on rates of Wasserstein convergence for a class of RBM. Under conditions on the drift vector, the covariance matrix of the Brownian motion, and the  reflection matrix (see Conditions (BC1)-(BC3) in Section \ref{eg}), \cite{BC2016} analyzed the behavior of the RBM inside the small set explicitly by considering \emph{synchronous couplings} (namely, couplings where the RBM starting from different points are driven by the same Brownian motion). Using explicit couplings to obtain better convergence rate estimates is a relatively recent but developing area. See \cite{bolley2010trend,Eberle2015,Eberle2017,eberle2016quantitative} for such results for other classes of diffusions. In this work, we revisit the idea of constructing synchronous couplings for RBM. Under quite general conditions (specifically, the ones introduced in \cite{harrison1987brownian} that guarantee  the existence of strong solutions and positive recurrence), we construct a suitable Lyapunov function and identify (an appropriate analogue of) a small set that both depend crucially on the process parameters and the state dimension. This, along with a careful treatment of excursions from the small set, enables us to quantify contraction rates in $L^1$-distance for synchronous couplings starting from distinct points and thereby obtain  rates of Wasserstein convergence that are given explicitly in terms  of the system parameters, the state dimension, and some  constants (that do not depend on dimension or model parameters). These convergence rates, together with bounds on relaxation times of the RBM that follow from it, are the main results  of this work and are given in Theorem \ref{wasthm}. In Section \ref{eg} we apply these results to the class of RBMs considered in \cite{BC2016} and rank-based diffusions considered in \cite{IPS}. In the former case, we substantially improve the relaxation time estimates from $O(d^4(\log d)^2)$ obtained in \cite{BC2016} to $O((\log d)^2)$. In the latter case, we give the first results on explicit parameter and dimension dependent rates under the Wasserstein distance. The proofs do not require   an explicit form for the stationary measure or reversibility of the process with respect to this measure, and cover settings where these properties are not available. In the special case of the standard Atlas model \cite{Fern}, we give a bound on the relaxation time of $O(d^6(\log d)^2)$ (see Remark \ref{sam}).

\section{Model, notation and assumptions}
Let $B$ be a $d$-dimensional standard Brownian motion and let  $\mu \in \mathbb{R}^d$ and $\D, R \in \mathbb{R}^{d \times d}$. Consider for $\x \in \mathbb{R}^d_+ := [0, \infty)^d$ the $\mathbb{R}^d_+$-valued continuous stochastic process given by the equation
\begin{equation}\label{RBMdef}
X(t;\x) = \x + \D B(t) + \mu t + RL(t),
\end{equation}
 where $L$, referred to as  the local time process, is a non-decreasing continuous process satisfying 
 \begin{equation}\label{loctim}
 L(0)=0, \;\; \int_0^t X_i(s;\x)dL_i(s) = 0 \mbox{ for all } t>0 \mbox{ and } 1 \le i \le d.
 \end{equation}
 We will make the following basic assumptions.\\\\\\

 \textbf{Assumptions: }
 \begin{itemize}
 \item[(A1)] The matrix $P := I - R^T$ is substochastic (non-negative entries and row sums bounded above by 1) and transient ($P^n \rightarrow 0$ as $n \rightarrow \infty$).  
 \item[(A2)] $\bb := -R^{-1}\mu > 0$.
 \item[(A3)] The matrix $\Sigma = \D\D^T$ is positive definite.
 \end{itemize}
 The paper \cite{harrison1981reflected} shows that under (A1) there is a unique strong solution to \eqref{RBMdef} -
 \eqref{loctim}, namely for each $\x \in \mathbb{R}^d_+$ there is a unique pair of continuous stochastic processes $(X,L)$ satisfying the above equations. 
 This assumption is satisfied by the routing matrix  of any single-class open queueing network  \cite{harrison1981reflected} and consequently diffusion limits of such networks can be characterized by \eqref{RBMdef} -
 \eqref{loctim}.
 The collection $\{X(\cdot; \x)\}_{\x \in \mathbb{R}^d_+}$ defines a strong Markov process (see \cite{harrison1987brownian}) which we denote as $\RBM(\mu, \Sigma, R)$ and   refer to simply as the reflected Brownian motion (RBM). 
 The matrix $R$ describes the reflection mechanism, specifically,
  the $i$-th column of $R$ gives the direction of reflection on the $i$-th face of the orthant. The conditions on $P$ in particular say that its spectral radius is strictly less than $1$.  The matrix $\Sigma = \D\D^T$ gives the covariance matrix associated with the diffusion term of \eqref{RBMdef}. \\

\textbf{Notation: } Although $\mu, \Sigma$ and $R$ depend on the dimension $d$, this dependence is suppressed to avoid cumbersome notation. We will write $\bb =- R^{-1}\mu$. The entries of $\bb$ will be denoted by $b_i$, and the diagonal entries of $\Sigma$ will be denoted by $\sigma_i^{2}$, where $1 \le i \le d$. {\em All constants appearing in the statements of lemmas and theorems will be universal in that they do not depend on model parameters or the dimension $d$, unless noted otherwise.}\\

\begin{rem}
	\label{rem:rem1}
	{\em Unique strong solutions of the RBM that follow from (A1) imply that
	  any coupling of the driving Brownian motions translate into a coupling of the processes themselves.
	 Throughout this work we will take the family} $\{X(\cdot; \x)\}_{\x \in \mathbb{R}^d_+}$ {\em to be } driven by the same Brownian motion, {\em namely we will consider a} synchronous coupling {\em of the processes starting from different initial conditions. 
	 Assumption (A2) is the well known `stability condition' which is sufficient for the existence of a stationary measure \cite{harrison1987brownian}. The condition is almost necessary for stability in that if} $\bb_i<0$ {\em for some} $i$ {\em then the RBM is transient \cite{buddupsimp}. Assuming (A3) in addition to (A1)-(A2) gives that the strong Markov process} $\RBM(\mu, \Sigma, R)$ {\em has a unique stationary probability distribution \cite{harrison1987brownian}.}
\end{rem}

\section{Main Result}\label{con}
Following \cite{BC2016}, define the following stopping times: $\eta^0(\x) = 0$ and
\begin{equation*}
\eta_i^k(\x) = \inf\{t \ge \eta^{k-1}(\x) + 1: X_i(t,\x) = 0\}, \ \ \ \eta^k(\x) = \sup\{\eta_i^k(\x): 1 \le i \le d\}.
\end{equation*}
Define
$$
\mathcal{N}(t;\x) = \sup\{ k \ge 0 : \eta^k(\x) \le t\}.
$$
Also define the {\em contraction coefficient}
\begin{equation}\label{nd}
n(R) := \inf\{ n \ge 1: \|P^n \mathbf{1}\|_{\infty} \le 1/2\},
\end{equation}
where $\mathbf{1}$ is a $d$-dimensional vector of ones and for $u \in \mathbb{R}^d$, $\|u\|_{\infty} := \sup_{1\le i \le d} |u_i|$.
By Assumption (A1), $n(R) < \infty$. This quantity plays a key role in quantifying the convergence rate to equilibrium.

We now present the main result of this work. Given probability measures $\mu$ and $\nu$ on $\mathbb{R}^d_+$, a probability measure $\gamma$ on $\mathbb{R}^d_+\times \mathbb{R}^d_+$
is said to be a coupling of $\mu$ and $\nu$ if $\gamma(\cdot \times \mathbb{R}^d_+) = \mu(\cdot)$ and $\gamma(\mathbb{R}^d_+ \times \cdot) = \nu(\cdot)$.
 The $L^1$-Wasserstein distance between two probability measures $\mu$ and $\nu$ on $\mathbb{R}^d_+$ is given by
$$
W_1(\mu, \nu) = \inf\left\lbrace\int_{\mathbb{R}^d_+ \times \mathbb{R}^d_+}\|\x-\y\|_1 \gamma(\operatorname{d}\x, \operatorname{d}\y) \ : \gamma \text{ is a coupling of } \mu \text{ and } \nu\right\rbrace,
$$
where for a vector $z \in \mathbb{R}^d$, $\|z\|_1 = \sum_{i=1}^d |z_i|$.
We will denote the law of a random variable $X$ by $\mathcal{L}(X)$. 
Recall that from \cite{harrison1987brownian}, under Assumptions (A1)-(A3), there is a unique stationary distribution
of the RBM.
Denote by $\mathbf{X}(\infty)$ a random vector sampled from this stationary distribution.
Define the \emph{relaxation time}, $t_{rel}(\x)$  for the RBM starting from $\x \in \mathbb{R}^d_+$ as 
$$t_{rel}(\x) := \inf\{t\ge 0: W_1\left(\mathcal{L}(X(t; \x)), \mathcal{L}(\mathbf{X}(\infty))\right) \le 1/2\}.$$
We will abbreviate the parameters of the RBM as $\Theta :=(\mu, \Sigma, R)$. Recall that these parameters are required to satisfy (A1)-(A3). We will quantify rate of convergence to equilibrium in terms of the following functions of $\Theta,d$. Fix $\kappa \in (0,\infty)$. Let
\begin{align*}
a(\Theta) &:= \sup_{1 \le i \le d}\left[\frac{\sum_{j=1}^d(R^{-1})_{ij}\sigma_j}{b_i}\right],\ \ \ \ \ \ \ 
b(\Theta) := \sup_{1\le i \le d}\left[\frac{\sum_{j=1}^d(R^{-1})_{ij}\sigma_j}{\sigma_i}\right],\\
R_1(\Theta,d) &:= n(R)(1+a(\Theta)^2\log(2d)),\ \ \ \ \ \ \ 
R_2(\Theta) := a(\Theta)^2b(\Theta),\\
C_1(\x,\Theta)&:= 2\|\x\|_1 + a(\Theta)\sum_{i,j}(R^{-1})_{ij}\sigma_j,\\
C_2(\x,\Theta, \kappa) &:= 2\|\x\|_1e^{3(\kappa a(\Theta)b(\Theta))^{-1}\|\x\|_{\infty}^*} + a(\Theta)\left[2d(1+d)\left(\sum_{i,j}(R^{-1})^2_{ij}\right)\left(\sum_{j=1}^d\sigma_j^2\right)\right]^{1/2}.
\end{align*}
 For any $\x \in \mathbb{R}^d_+$, define $\|\x\|_{\infty}^* := \sup_{1\le i \le d}\sigma_i^{-1}\x_i$.
\begin{theorem}\label{wasthm}
There exist a  $t_0 \in (0,\infty)$ and $D_1, D_2 \in (0,\infty)$ such that for every $d \in \mathbb{N}$, 
$\x \in \mathbb{R}^d_+$, every parameter choice $\Theta$, and $ t \ge t_0 \left(1 + (a(\Theta))^2\log(2d)\right)$,
\begin{align*}
	W_1\left(\mathcal{L}(X(t; \x), \mathcal{L}(\mathbf{X}(\infty))\right) &\le
\mathbb{E}(\|X(t;\x) - X(t; \mathbf{X}(\infty))\|_1)\\
& \le C_1(\x,\Theta)\left(2e^{-\frac{D_1t}{R_1(\Theta,d)}} + e^{-\frac{t}{16D_2R_2(\Theta)}}\right)
 + C_2(\x,\Theta, D_2)e^{-\frac{t}{8D_2R_2(\Theta)}}.
\end{align*}
 In particular, the relaxation time satisfies
\begin{align*}
t_{rel}(\x) \le \max\{D_1^{-1}R_1(\Theta,d) \log(8C_1(\x,\Theta)) + 16D_2R_2(\Theta)\log[4(C_1(\x,\Theta) + C_2(\x,\Theta, D_2))],\\
\qquad \qquad \qquad t_0 \left(1 + (a(\Theta))^2\log(2d)\right)\}.
\end{align*}
\end{theorem}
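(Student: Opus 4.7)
\emph{Proof proposal.} The plan is to work with the synchronous coupling from Remark~\ref{rem:rem1}: couple $X(\cdot;\x)$ with $X(\cdot;\mathbf{X}(\infty))$ using the same driving Brownian motion, so that
\[
W_1(\mathcal{L}(X(t;\x)),\mathcal{L}(\mathbf{X}(\infty))) \le \mathbb{E}\|X(t;\x) - X(t;\mathbf{X}(\infty))\|_1.
\]
Setting $Y(t) := X(t;\x) - X(t;\mathbf{X}(\infty))$, the Brownian and drift contributions in~\eqref{RBMdef} cancel and
\[
Y(t) = Y(0) + R\bigl(L^{\x}(t) - L^{\mathbf{X}(\infty)}(t)\bigr)
\]
is a continuous process of finite variation. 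The whole argument amounts to showing that local time accrues often enough, and in a sufficiently balanced way across the $d$ faces, that $\|Y(t)\|_1$ shrinks; the identity $\|P^{n(R)}\mathbf{1}\|_\infty \le 1/2$ will ultimately deliver the halving.

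The first building block is a linear Lyapunov function. I would take $V(\x) := \mathbf{c}^\top \x$ with $\mathbf{c} := (R^{-1})^\top \mathbf{1}$; assumptions (A1)--(A2) guarantee $\mathbf{c} > 0$, while $\mathbf{c}^\top \mu = -\mathbf{1}^\top \bb < 0$ and $\mathbf{c}^\top R\, dL = \mathbf{1}^\top dL \ge 0$, so $V(X(t;\x))$ is a drifted scalar Brownian motion pushed by a nonnegative process at the boundary. Applying Ito's formula to $e^{\alpha V(X(t;\x))}$ with $\alpha$ proportional to $1/(a(\Theta)b(\Theta))$ would yield a supermartingale-type inequality outside a small set
\[
\mathcal{K} := \{\x \in \mathbb{R}^d_+ : \|\x\|_\infty^* \le K_0\},
\]
for a $K_0$ proportional to $a(\Theta)b(\Theta)$, and hence exponential tail bounds on the hitting and return times to $\mathcal{K}$ at rate of order $1/R_2(\Theta)$. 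This is exactly the source of the term $C_2(\x,\Theta,D_2)e^{-t/(8D_2 R_2(\Theta))}$: the $e^{3(\kappa a(\Theta)b(\Theta))^{-1}\|\x\|_\infty^*}$ factor in $C_2$ is the initial value of the exponential Lyapunov functional, and the $1/R_2$ rate is its tail rate. The middle term $C_1(\x,\Theta)e^{-t/(16 D_2 R_2(\Theta))}$ should arise as a stationary-side analogue when controlling the excursions of $X(\cdot;\mathbf{X}(\infty))$ out of $\mathcal{K}$.

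The second building block is an in-small-set contraction step, refining the approach of~\cite{BC2016}. For $\x \in \mathcal{K}$ and a time window of length $\tau$ of order $1 + a(\Theta)^2 \log(2d)$, one-dimensional Gaussian tail bounds applied coordinatewise (using that each $X_i$ is dominated on $\mathcal{K}$ by a one-dimensional reflected Brownian motion with drift of size $b_i$ and diffusivity $\sigma_i$, and that $\|\x\|_\infty^* \le K_0$ sets the initial scale) should show that every face $i=1,\dots,d$ is hit by both coupled copies within the window with probability bounded below by a universal constant. The union bound over $d$ coordinates of sub-Gaussian events is exactly where the $\log(2d)$ factor in $R_1(\Theta,d)$ originates, replacing the polynomial-in-$d$ bounds of~\cite{BC2016}. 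Each successful round applies $P^\top$ to the vector of surviving coordinate displacements of $Y$, so that $n(R)$ successful rounds halve a suitable $\mathbf{1}$-weighted norm of $Y$; this produces the contraction rate $1/R_1(\Theta,d)$ and accounts for the $C_1(\x,\Theta)\,2e^{-D_1 t/R_1(\Theta,d)}$ term.

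Finally, I would stitch the two building blocks together by a renewal-style argument based on the stopping times $\eta^k(\x)$ and the counting process $\mathcal{N}(t;\x)$ defined in the excerpt: decompose $[0,t]$ into excursions outside $\mathcal{K}$ and dwell periods inside $\mathcal{K}$, use the exponential hitting-time tails from the Lyapunov step to bound the number and total length of excursions, and apply the contraction estimate during each dwell period. The relaxation time bound then follows by setting the three-term upper bound equal to $1/2$ and solving for $t$. The main obstacle I anticipate is the contraction step: proving that the probability of \emph{all} $d$ coordinates hitting zero within a universal multiple of $1+a(\Theta)^2\log(2d)$ is bounded below by a universal constant requires a delicate simultaneous control of $d$ correlated one-dimensional excursion problems, a careful choice of $K_0$ that balances the Lyapunov scale $a(\Theta)b(\Theta)$ against the hitting-time scale $a(\Theta)^2$, and tight use of sub-Gaussian maxima to extract the logarithmic rather than polynomial dimension dependence that drives the improvement over~\cite{BC2016}.
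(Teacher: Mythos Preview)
Your overall architecture---synchronous coupling, a Lyapunov bound on return times to a small set, a contraction step inside the small set driven by Lemma~\ref{contract}, and an excursion/renewal decomposition---matches the paper's. But the Lyapunov step as you propose it does not work. With $V(\x)=\mathbf{c}^\top\x$, $\mathbf{c}=(R^{-1})^\top\mathbf{1}$, It\^o's formula on $e^{\alpha V(X(t))}$ produces the boundary contribution $\alpha e^{\alpha V}\,\mathbf{c}^\top R\,dL=\alpha e^{\alpha V}\,\mathbf{1}^\top dL\ge 0$, which pushes the exponential Lyapunov \emph{up}. Being outside your small set $\{\|\x\|_\infty^*\le K_0\}$ does not keep the process away from the boundary (one coordinate can be large while another sits at zero), so you cannot discard this term, and $e^{\alpha V+\lambda t}$ is not a supermartingale outside $\mathcal{K}$ for any $\lambda>0$. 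The exponential hitting-time tails you claim at rate $1/R_2(\Theta)$ therefore do not follow from a linear Lyapunov applied to $X$ itself.

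The paper repairs this with two ideas you are missing. First, it introduces the normally reflected bounding process $X^+_v(\cdot;\x)$ (drift $-v$, reflection matrix $I$) for any $v>0$ with $R^{-1}v\le \bb$, satisfying $R^{-1}X\le R^{-1}X^+_v$; the excursion stopping times and the small set are then defined in terms of $X^+_v$, not $X$. Second, it uses a log-sum-exp Lyapunov $V(\y)=\log\sum_i e^{g(2A^{-1}v_i\sigma_i^{-2}y_i)}$ with $g'(0)=0$, so $\partial_i V$ vanishes on face $i$ and the local-time term disappears; this yields genuine exponential moments for $\tau^+_A$ (Lemma~\ref{lyaphit}). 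The bounding process also resolves the obstacle you flag at the end: it decouples the ``simultaneous control of $d$ correlated one-dimensional problems'' into independent one-dimensional RBMs for the maximal-process estimate (Lemma~\ref{RBMdrift}) and a drifted Brownian motion $U=R^{-1}(\x+\D B+\mu t)$ for the all-coordinates-hit-zero estimate (Lemma~\ref{zerohit}). Finally, the stated bound arises only after optimizing over $v$, the maximizing choice being $\tilde v_i=\sigma_i/a(\Theta)$; this step is absent from your proposal and is what converts the $v$-dependent rates $\Lambda(v)$, $T(v)$ into the parameter combinations $R_1(\Theta,d)$ and $R_2(\Theta)$.
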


\begin{rem}
	The universal constants $t_0, D_1$ and $D_2$ will be identified in Sections \ref{sm} and \ref{excth}. Specifically,
$t_0$ and $D_1$ are introduced in Lemma \ref{wassest} (see \eqref{eq:rev1})	and 
 $D_2 := \max\{A_0, 9\}$,	where   $A_0$ is introduced in Lemma \ref{lyaphit}.
\end{rem}
\begin{rem}\label{betterconst}
The proof of Theorem \ref{wasthm} (see Remark \ref{bc}) will show that one can, in fact, obtain a better bound of the form
\begin{multline*}
	W_1\left(\mathcal{L}(X(t; \x), \mathcal{L}(\mathbf{X}(\infty))\right) \le
\mathbb{E}(\|X(t;\x) - X(t; \mathbf{X}(\infty))\|_1)\\
 \le C_1(\x,\Theta)\left(e^{-\frac{D_1n(R)t}{R_1(\Theta,d)}} + e^{-\frac{32(\log 2)D_1t}{R_1(\Theta,d)}} + e^{-\frac{t}{16D_2R_2(\Theta)}}\right)
 + C_2(\x,\Theta, D_2)e^{-\frac{t}{8D_2R_2(\Theta)}}.
\end{multline*}
This bound leads to a better choice of the universal constants appearing in the exponents of the bound when $n(R)$ is large. As a consequence the  bounds on relaxation times and the bounds given in the examples of Section \ref{eg} can be slightly improved using the above estimate. However, this improved bound leads to cumbersome expressions in the bounds and relaxation time estimates in Section \ref{eg}. Moreover, our main goal is to highlight the dependence of the convergence rates on system parameters which is completely captured by Theorem \ref{wasthm}. Hence, we do not give details on how the improved bound can be obtained, however see Remark \ref{bc} for some additional comments.
%present our results based on the bound appearing in Theorem \ref{wasthm}.
\end{rem}
An important ingredient in the proof is the following analogue of Lemma 3 from \cite{BC2016}
 which shows that the synchronous coupling gives an a.s. contraction of the $L^1$-distance $\|X(t;\x) - X(t;\0)\|_1$ which can be quantified as follows. The proof is similar to that in \cite{BC2016} and so only a sketch is provided.
\begin{lemma}[ see \cite{BC2016}]\label{contract}
For $\x \in \mathbb{R}^d_+$ and $t \ge0$,
$$
\|X(t;\x) - X(t;\0)\|_1 \le 2\|\x\|_1 2^{-\mathcal{N}(t;\x)/n(R)}.
$$
\end{lemma}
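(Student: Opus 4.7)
Since the two copies $X(\cdot;\x)$ and $X(\cdot;\0)$ are driven by the same Brownian motion, the Brownian and drift parts cancel in the difference and
$$Y(t) := X(t;\x) - X(t;\0) = \x + R\bigl(L^{\x}(t) - L^{\0}(t)\bigr)$$
is a path of bounded variation whose sample paths evolve only when at least one of the two copies visits the boundary of $\mathbb{R}^d_+$. The strategy is to establish (i) that $t \mapsto \|Y(t)\|_1$ is non-increasing and (ii) a componentwise vector inequality $|Y(\eta^k(\x))| \le P^T |Y(\eta^{k-1}(\x))|$ at the epochs $\eta^k(\x)$. Iterating (ii) over $n(R)$ consecutive epochs and invoking the definition \eqref{nd} yields a contraction by the factor $1/2$; repeating this $\lfloor \mathcal N(t;\x)/n(R)\rfloor$ times and using (i) to extend to arbitrary $t$ gives the claim.

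\textbf{Monotonicity.} For (i), I would differentiate
$$d\|Y(t)\|_1 = \sum_j \Bigl(\sum_i \sgn(Y_i(t))\,R_{ij}\Bigr)\,d\bigl(L^{\x}_j(t) - L^{\0}_j(t)\bigr)$$
and observe that $dL^{\x}_j > 0$ forces $X_j(\cdot;\x) = 0$, hence $Y_j(\cdot) = -X_j(\cdot;\0) \le 0$, so $\sgn(Y_j) = -1$. With $R_{jj} = 1$ and $R_{ij} = -P_{ji}$ for $i \ne j$, the $j$-th bracket is at most $-1 + \sum_{i \ne j} P_{ji} \le 0$ by the substochastic property of $P$. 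The analogous sign-tracking handles $dL^{\0}_j$ via $Y_j \ge 0$ at such times, so $\|Y(t)\|_1$ is non-increasing.

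\textbf{Per-cycle contraction.} For (ii), the mechanism is that at each time $\eta_i^k(\x)$ the $i$-th coordinate of $X(\cdot;\x)$ sits at zero, whereby the diagonal pushing $R_{ii}\,dL^{\x}_i = dL^{\x}_i$ cancels the positive part of $Y_i$ and the off-diagonal entries $R_{ji} = -P_{ij}$ transport the residual magnitude into the other coordinates with weights $P_{ij}$; analogously for the boundary visits of $X(\cdot;\0)$. Because the definition of $\eta^k(\x)$ forces every coordinate to hit zero at least once during $[\eta^{k-1}(\x),\eta^k(\x)]$, after one cycle the magnitudes satisfy the componentwise inequality $|Y(\eta^k(\x))| \le P^T |Y(\eta^{k-1}(\x))|$; iterating gives $|Y(\eta^{k+n}(\x))| \le (P^T)^n |Y(\eta^k(\x))|$.

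\textbf{Conclusion and main obstacle.} Taking $\mathbf 1^T$ in the iterated inequality and using $\mathbf 1^T (P^T)^{n(R)} = (P^{n(R)}\mathbf 1)^T$, one obtains $\|Y(\eta^{k+n(R)}(\x))\|_1 \le \|P^{n(R)}\mathbf 1\|_\infty \cdot \|Y(\eta^k(\x))\|_1 \le \tfrac12 \|Y(\eta^k(\x))\|_1$ by the definition \eqref{nd}. Iterating $m := \lfloor \mathcal N(t;\x)/n(R)\rfloor$ times and using (i) to pass from $\eta^{m\,n(R)}(\x)$ to $t$ yields
$$\|Y(t)\|_1 \le \|Y(\eta^{m\,n(R)}(\x))\|_1 \le 2^{-m}\|\x\|_1 \le 2\|\x\|_1\cdot 2^{-\mathcal N(t;\x)/n(R)},$$
where the final factor of $2$ absorbs the rounding $m \ge \mathcal N(t;\x)/n(R) - 1$. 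The main obstacle is step (ii): the random interleaving of the two local-time processes $L^{\x}$ and $L^{\0}$ across the random interval $[\eta^{k-1}(\x),\eta^k(\x)]$ requires careful sign and magnitude bookkeeping to extract the clean componentwise $P^T$-factor. As the mechanism is identical to that in Lemma~3 of \cite{BC2016}, I would defer the detailed accounting to that reference.
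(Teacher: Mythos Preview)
Your approach is essentially the same as the paper's: both establish the key inequality $\|X(t;\x) - X(t;\0)\|_1 \le \|P^{\mathcal{N}(t;\x)}\mathbf{1}\|_\infty \|\x\|_1$ by deferring to Lemmas~2 and~3 of \cite{BC2016}, and then conclude via the definition of $n(R)$. The paper states this inequality directly (its equation \eqref{eq:blanchenest}) and bounds $\|P^N\mathbf{1}\|_\infty \le 2\cdot 2^{-N/n(R)}$; your route through the componentwise inequality $|Y(\eta^k)| \le P^T|Y(\eta^{k-1})|$ followed by $\mathbf{1}^T$ and iteration is equivalent. One small correction: since $R = I - P^T$, in general $R_{jj} = 1 - P_{jj}$ rather than $1$, but your monotonicity conclusion is unaffected because the full bound $-1 + \sum_i P_{ji} \le 0$ still holds.
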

\begin{proof}
The main idea is to associate the substochastic matrix $P$ with a Markov chain on states $\{0,1,\dots,d\}$ absorbed at $0$ and show that $\|\x\|_1^{-1}\|X(t;\x) - X(t;\0)\|_1$ (assuming $\|\x\|_1 \neq 0$) is bounded above by the maximum over the initial state $i$ of the probability that, starting from $i$, the Markov chain is not absorbed by time $\mathcal{N}(t;\x)$. Using this idea, Lemma 2 in \cite{BC2016} and the proof of Lemma 3 in \cite{BC2016} establish
\begin{equation}\label{eq:blanchenest}
\|X(t;\x) - X(t;\0)\|_1 \le \|P^{\mathcal{N}(t;\x)} \mathbf{1}\|_{\infty} \|\x\|_1.
\end{equation}
The lemma now follows from the definition of $n(R)$ given in \eqref{nd} above.
\end{proof}
\begin{rem}\label{betterer}
The quantity $n(R)^{-1}$ defined in \eqref{nd} gives an explicit bound on the exponential decay rate of $\|P^{n} \mathbf{1}\|_{\infty}$ with $n$. Note that $n(R)$ possibly depends on the dimension $d$, but the dependence is solely through $R$. Sometimes (as we will see in the first example of Section \ref{eg}) it is possible to get a better bound in the sense that we can obtain positive constants $C(R,d)$ and $n'(R)< n(R)$ such that
$$
\|P^{n}\mathbf{1}\|_{\infty} \le C(R,d) 2^{-n/n'(R)},  \ \ n \ge 0.
$$
In this case, we can replace the bound in Lemma \ref{contract} by
\begin{equation}\label{eq:bettbd}
\|X(t;\x) - X(t;0)\|_1 \le C(R,d)\|\x\|_1 2^{-\mathcal{N}(t;\x)/n'(R)}.
\end{equation}
\end{rem}

\begin{rem}\label{wastwo}
In cases where we can obtain the better bound \eqref{eq:bettbd}, the constants
% exponential decay rates on $\|P^n\one\|_{\infty}$ than $n(R)^{-1}$ in the sense of Remark \ref{betterer}, we can replace the bound obtained in Lemma \ref{contract} with the one stated in Remark \ref{betterer}. Consequently, 
$R_1(\Theta,d)$ and $C_1(\x, \Theta)$ appearing in the bounds on Wasserstein distance and relaxation time  in Theorem \ref{wasthm} can be replaced by $R_1'(\Theta,d)$ and $C_1'(\x, \Theta,d)$ respectively, where
\begin{align*}
R_1'(\Theta,d) &:= n'(R)(1+a(\Theta)^2\log(2d)),\\
C_1'(\x,\Theta, d)&:= 2\|\x\|_1 + \frac{a(\Theta)C(R, d)}{2}\sum_{i,j}(R^{-1})_{ij}\sigma_j.
\end{align*}
\end{rem}

\section{Outline of Approach}
We now give an outline of our approach.
\begin{itemize}
\item[(i)] We use a key idea from \cite{BC2016} which shows that, under the synchronous coupling, the $L^1$-distance between the two processes $X(\cdot;\0)$ and $X(\cdot;\x)$ decreases with time. Using this idea, we provide an estimate on the rate of decay of this $L^1$-distance in terms of a `contraction coefficient' which quantifies the decay rate of $\|P^n\one\|_{\infty}$ with $n$. The precise statement was formulated as Lemma \ref{contract} in Section \ref{con}.
\item[(ii)] We use the fact that for any $v>0$ in $\mathbb{R}^d$ satisfying $R^{-1}v \le \bb$, one can dominate the process $X(\cdot;\x)$ in an appropriate manner by a normally reflected Brownian motion with drift $-v$ in $\mathbb{R}^d_+$. This process, written as $X^+_v(\cdot;\x)$, is technically simpler to analyze. The idea of dominating an $\RBM(\mu, \Sigma, R)$ by a normally reflected RBM is due to \cite{harrison1987brownian}. Next, we choose an appropriate compact set (which plays a role similar to the `small set' in the terminology of \cite{meyn2012markov}) such that one can obtain a tight control over return times to this set (this is done via Lyapunov function techniques in Lemma \ref{lyaphit}) and, loosely speaking, is such that the $L^1$-distance between the synchronously coupled processes $X(\cdot;\0)$ and $X(\cdot;\x)$ decreases by a constant factor each time the process $X^+_v(\cdot;\x)$ visits this set (this result is  formulated in Lemma \ref{zerohit}). A crucial ingredient here is the introduction of a suitable weighted norm (see \eqref{wnorm}) whose sub-level sets are the appropriate `small sets' with the desired contraction property. The
definition of this norm is guided by an analysis of how the maximum process for each coordinate scales with the system parameters. This weighted norm is  used to construct the small set and also an appropriate Lyapunov function. These constructions and their properties are studied in Section \ref{sm}.
\item[(iii)] In Section \ref{excth}, we obtain the rate of decay of $\|X(t;\x) - X(t;\0)\|_1$ with time $t$, in terms of the parameter $v$ of the dominating normally reflected RBM,
by decomposing the path of $X^+_v(\cdot;\x)$ into excursions from the small set obtained in (ii) and using the estimates from Section \ref{sm} for probabilities of certain events associated with these excursions.
\item[(iv)] Finally in Section \ref{optimize} we prove our main result, namely Theorem \ref{wasthm}, where we obtain explicit parameter and dimension dependent rates of decay in $L^1$-Wasserstein distance between the processes $X(\cdot;\0)$ and $X(\cdot;\x)$ with time $t$ by optimizing the rates derived in (iii) over  the parameter $v>0$ of the dominating RBM.
\end{itemize}

Before proceeding to the proof we apply Theorem \ref{wasthm} in two settings, the first is that of RBM satisfying the assumptions of \cite{BC2016} and the second corresponds to that of rank-based diffusions such as  the Atlas model.

\section{Examples}\label{eg}
We will use Theorem \ref{wasthm} (and Remark \ref{wastwo}) to obtain bounds on the rate of convergence to equilibrium in two examples that are discussed in Sections \ref{sec:bcrbm} and \ref{sec:atlmod} below.

\subsection{Blanchet-Chen RBM}
\label{sec:bcrbm}
This refers to the class of RBM under the set of assumptions in \cite{BC2016}, namely:
\begin{itemize}
\item[(BC1)] The matrix $P$ is substochastic and there exist $\kappa>0$ and $\beta \in (0,1)$ not depending on the dimension $d$ such that $\|\one^TP^n\|_{\infty} \le \kappa(1 - \beta)^n$ for all $n \ge 0$. 
%\item[(A2)] There exists $\beta' \in (0,1)$ independent of $d$ such that $\|P\one\|_{\infty} \le (1-\beta')$.
\item[(BC2)] There exists $\delta>0$ independent of $d$ such that $R^{-1}\mu < -\delta \one$.
\item[(BC3)] There exists $\sigma>0$ independent of $d$ such that $ \sigma_i :=\sqrt{\Sigma_{ii}}$ satisfies $\sigma^{-1} \le \sigma_i \le \sigma$ for every $1 \le i \le d$.
\end{itemize}
Under the above conditions \cite{BC2016} give a polynomial bound  of $O(d^4(\log d)^2)$  on the relaxation time of the RBM.
As shown in the following theorem, Theorem \ref{wasthm} gives a  substantial improvement by establishing a polylogarithmic relaxation time of $O((\log d)^2)$. 
\begin{theorem}\label{BCbound}
Under Assumptions (BC1), (BC2) and (BC3), there exist positive constants $E_1, E_2, E_3, E_4,t_1$ such that for any $\x \in \mathbb{R}^d_+, t \ge t_1 \max\{\|\x\|_{\infty}, \log(2d)\}$,
\begin{equation*}
\mathbb{E}(\|X(t;\x) - X(t; \mathbf{X}(\infty))\|_1) \le  2\left(2\|\x\|_1 + E_1d^2\right)e^{-E_2 t/\log(2d)} + \left(4\|\x\|_1 + E_1d^2\right)e^{-E_4t/2} + E_3 d^{2}e^{-E_4t}.
\end{equation*}
In particular, the relaxation time satisfies
\begin{align*}
t_{rel}(\x) \le \max \left\lbrace E_2^{-1}\log\left[8\left(2\|\x\|_1 + E_1d^2\right)\right]\log (2d) + E_4^{-1}\left[2\log\left[8\left(4\|\x\|_1 + E_1d^2\right)\right] + \log (8E_3d^{2})\right],\right.\\
\left.\qquad \qquad \qquad  t_1 \max\{\|\x\|_{\infty}, \log(2d)\}\right\rbrace.
\end{align*}
\end{theorem}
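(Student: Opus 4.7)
The plan is to apply Theorem \ref{wasthm} in the refined form of Remark \ref{wastwo} and to bound all parameter-dependent quantities appearing there by dimension-free constants (depending only on $\kappa,\beta,\delta,\sigma$) times explicit powers of $d$.

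\emph{Step 1 (parameter estimates).} (BC1) forces $(P^n)_{ij}\le \|\one^T P^n\|_\infty\le \kappa(1-\beta)^n$ entrywise, hence $\|P^n\one\|_\infty\le d\kappa(1-\beta)^n$. This lets us invoke Remark \ref{betterer} with the dimension-free constant $n'(R)=(\log 2)/\log(1/(1-\beta))$ and $C(R,d)=d\kappa$. The Neumann expansion $(R^{-1})_{ij}=\sum_{n\ge 0}(P^n)_{ji}$ together with (BC1) and (BC3) gives
\[
\sum_{j}(R^{-1})_{ij}\sigma_j\;\le\;\sigma\sum_{n\ge 0}(\one^T P^n)_i\;\le\;\sigma\kappa/\beta.
\]
Combined with (BC2) and (BC3), this bounds $a(\Theta), b(\Theta), R_2(\Theta)$ by dimension-free constants and yields $R_1'(\Theta,d)=O(\log(2d))$. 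Summing the previous display over $i$ gives $\sum_{i,j}(R^{-1})_{ij}\sigma_j\le d\sigma\kappa/\beta$, so $C_1'(\x,\Theta,d)\le 2\|\x\|_1+E_1 d^2$ for a constant $E_1$. For $C_2(\x,\Theta,D_2)$, the entrywise bound $(R^{-1})_{ij}\le\kappa/\beta$ together with $\sum_{i,j}(R^{-1})_{ij}\le d\kappa/\beta$ yields $\sum_{i,j}(R^{-1})^2_{ij}=O(d)$; combined with $\sum_j\sigma_j^2\le d\sigma^2$ from (BC3), this bounds the second summand of $C_2$ by $E_3 d^2$. The first summand is exactly $2\|\x\|_1 e^{3\|\x\|_\infty^*/(D_2 a(\Theta) b(\Theta))}$.

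\emph{Step 2 (assembly).} Substituting Step~1 into the estimate of Remark \ref{wastwo}, with $E_2$ proportional to $D_1$ and $E_4:=1/(8D_2 R_2(\Theta))$, bounds $\mathbb{E}\|X(t;\x)-X(t;\mathbf{X}(\infty))\|_1$ by
\[
(2\|\x\|_1+E_1 d^2)\bigl(2e^{-E_2 t/\log(2d)}+e^{-E_4 t/2}\bigr)+\bigl(2\|\x\|_1 e^{3\|\x\|_\infty^*/(D_2 a(\Theta) b(\Theta))}+E_3 d^2\bigr)e^{-E_4 t}.
\]
To remove the factor $e^{3\|\x\|_\infty^*/(D_2 a(\Theta) b(\Theta))}$, observe that whenever $t\ge 48\,a(\Theta)\|\x\|_\infty^*$ the combined exponent in the first term of the second group satisfies
\[
\frac{3\|\x\|_\infty^*}{D_2 a(\Theta) b(\Theta)}-\frac{t}{8 D_2 a(\Theta)^2 b(\Theta)}\;\le\;-\frac{t}{16 D_2 R_2(\Theta)}=-\frac{E_4 t}{2}.
\]
Since $a(\Theta)\|\x\|_\infty^*\le (\sigma\kappa/(\beta\delta))\sigma\|\x\|_\infty$, the condition $t\ge 48 a(\Theta)\|\x\|_\infty^*$ is implied by $t\ge t_1\|\x\|_\infty$ for a suitable constant $t_1$. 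Combining the resulting $2\|\x\|_1 e^{-E_4 t/2}$ with the middle term yields the advertised $(4\|\x\|_1+E_1 d^2)e^{-E_4 t/2}$ summand. The applicability requirement $t\ge t_0(1+a(\Theta)^2\log(2d))=O(\log(2d))$ of Theorem \ref{wasthm} is absorbed into $t\ge t_1\max\{\|\x\|_\infty,\log(2d)\}$ by enlarging $t_1$ once more. Since $W_1\le\mathbb{E}\|\cdot\|_1$, this gives the stated Wasserstein bound, and the relaxation-time estimate follows by requiring each of the three exponential summands to be at most a suitable small fraction (e.g., $1/6$) and solving for $t$ in each case, then taking the maximum with the applicability constraint.

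\emph{Main obstacle.} No new probabilistic input is needed; the argument is careful parameter bookkeeping applied to the abstract bound of Theorem~\ref{wasthm}. The qualitative reason for the improvement to $O((\log d)^2)$, both over the $O(d^4(\log d)^2)$ of \cite{BC2016} and over the $O((\log d)^3)$ that would result from invoking Theorem~\ref{wasthm} directly (since under (BC1) one has only $n(R)=O(\log d)$), is that Remark \ref{wastwo} permits $n'(R)$ to be a dimension-free constant at the price of only an $O(d)$ prefactor absorbed into $C_1'$; this inflates $\log C_1'$ by merely a constant factor, leaving the product $n'(R)\log(2d)\cdot\log C_1'=O((\log d)^2)$.
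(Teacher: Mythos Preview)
The proposal is correct and follows essentially the same route as the paper: invoke Remark~\ref{wastwo} with $C(R,d)=\kappa d$ and a dimension-free $n'(R)$ coming from (BC1), bound $a(\Theta),b(\Theta),R_2(\Theta),\sum_{i,j}(R^{-1})_{ij}\sigma_j,\sum_{i,j}(R^{-1})_{ij}^2$ via $\|R^{-1}\one\|_\infty\le\kappa/\beta$ and (BC2)--(BC3), and absorb the factor $e^{3\|\x\|_\infty^*/(D_2a(\Theta)b(\Theta))}$ into the exponential by imposing $t\ge 48\,a(\Theta)\|\x\|_\infty^*$. The only cosmetic difference is that the paper takes $n'(R)=\frac{\log 2}{\log(1-\beta)^{-1}}+1$ rather than your $\frac{\log 2}{\log(1-\beta)^{-1}}$; both satisfy the hypothesis of Remark~\ref{betterer}.
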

\begin{proof}
Observe that
$$
\|P^n\one\|_{\infty} \le \one^TP^n\one \le d\|\one^TP^n\|_{\infty} \le d\kappa(1-\beta)^n.
$$
Thus, the hypothesis of Remark \ref{betterer} is satisfied with 
$$
C(R,d)=\kappa d,\ \ n'(R) \equiv n' :=  \frac{\log(2)}{\log (1-\beta)^{-1}} + 1.
$$
Now we will use Theorem \ref{wasthm} in conjunction with Remark \ref{wastwo}.

Under Assumptions (BC1), (BC2) and (BC3), we have the following bounds:
\begin{align*}
a(\Theta) &= \sup_{1 \le i \le d}\left[\frac{\sum_{j=1}^d(R^{-1})_{ij}\sigma_j}{b_i}\right] \le \frac{\|R^{-1}\one\|_{\infty}\sigma}{\delta} \le \frac{\kappa \sigma}{\beta \delta},\\
b(\Theta) &= \sup_{1\le i \le d}\left[\frac{\sum_{j=1}^d(R^{-1})_{ij}\sigma_j}{\sigma_i}\right] \le \frac{\|R^{-1}\one\|_{\infty}\sigma}{\sigma^{-1}} \le \frac{\kappa \sigma^2}{\beta},\\
%n(R) &\le n_*(d) :=\max\left\lbrace \frac{\log(2\kappa d)}{\log (1-\beta)^{-1}} + 1, 1\right\rbrace,\\
R_1'(\Theta,d) &= n'(R)(1+a(\Theta)^2\log(2d)) \le n'\left(1 + \frac{\kappa^2\sigma^2}{\beta^2\delta^2}\log(2d)\right),\\
R_2(\Theta) &= a(\Theta)^2b(\Theta) \le \frac{\kappa^3\sigma^4}{\beta^3\delta^2},\\
C_1'(\x,\Theta, d)&= 2\|\x\|_1 + \frac{a(\Theta)C(R,d)}{2}\sum_{i,j}(R^{-1})_{ij}\sigma_j \le 2\|\x\|_1 + \frac{\kappa^2 \sigma d^2}{2\beta \delta}\|R^{-1}\one\|_{\infty}\sigma \le 2\|\x\|_1 + \frac{\kappa^3 \sigma^2d^2}{2\beta^2 \delta},
\end{align*}
where we have used the observation that under Assumption (BC1), one has $\|R^{-1}\one\|_{\infty} \le \kappa/\beta$.
Next, observe that $\|\x\|_{\infty}^* \le \sigma\|\x\|_{\infty}$. This, along with the bound on $a(\Theta)$ obtained above, implies that for $t \ge \frac{48\kappa \sigma^2}{\beta \delta}\|\x\|_{\infty}$, $3(D_2a(\Theta)b(\Theta))^{-1}\|\x\|_{\infty}^* \le t/(16D_2R_2(\Theta))$. Hence, for such $t$,
\begin{align*}
 C_2(\x,\Theta, D_2)e^{-\frac{t}{8D_2R_2(\Theta)}} &= 2\|\x\|_1e^{3(D_2a(\Theta)b(\Theta))^{-1}\|\x\|_{\infty}^*}e^{-\frac{t}{8D_2R_2(\Theta)}}\\
 &\quad + a(\Theta)\left[2d(1+d)\left(\sum_{i,j}(R^{-1})^2_{ij}\right)\left(\sum_{j=1}^d\sigma_j^2\right)\right]^{1/2}e^{-\frac{t}{8D_2R_2(\Theta)}}\\
 &\le 2\|\x\|_1e^{-\frac{t}{16D_2R_2(\Theta)}}  + \frac{\kappa \sigma}{\beta \delta}\left[2d(1+d)d\left(\|R^{-1}\one\|_{\infty}\right)^2\left(d\sigma^2\right)\right]^{1/2}e^{-\frac{t}{8D_2R_2(\Theta)}}\\
 &\le 2\|\x\|_1e^{-\frac{t}{16D_2R_2(\Theta)}}  + \frac{2\kappa^2 \sigma^2}{\beta^2 \delta}d^{2}e^{-\frac{t}{8D_2R_2(\Theta)}}\\
 & \le 2\|\x\|_1e^{-\frac{\beta^3\delta^2t}{16D_2\kappa^3\sigma^4}}  + \frac{2\kappa^2 \sigma^2}{\beta^2 \delta}d^{2}e^{-\frac{\beta^3\delta^2t}{8D_2\kappa^3\sigma^4}}.
\end{align*}
Take $E_1 =\frac{\kappa^3\sigma^2}{2\beta^2 \delta}, E_2 = D_1\left[n'\left(2 + \frac{\kappa^2\sigma^2}{\beta^2\delta^2}\right)\right]^{-1}, E_3 =  \frac{2\kappa^2 \sigma^2}{\beta^2 \delta}, E_4 = \frac{\beta^3\delta^2}{8D_2\kappa^3\sigma^4}$. Using the above bounds in Theorem \ref{wasthm} (modified as in Remark \ref{wastwo}), for any $\x \in \mathbb{R}^d_+$, $t \ge \max\left\lbrace t_0 \left(1 + \frac{\kappa^2\sigma^2}{\beta^2\delta^2}\log(2d)\right), \frac{48\kappa \sigma^2}{\beta \delta}\|\x\|_{\infty}\right\rbrace$,
\begin{multline*}
\mathbb{E}(\|X(t;\x) - X(t; \mathbf{X}(\infty))\|_1) \le C_1'(\x,\Theta,d)\left(2e^{-\frac{D_1t}{R_1'(\Theta,d)}} + e^{-\frac{t}{16D_2R_2(\Theta)}}\right)
 + C_2(\x,\Theta, D_2)e^{-\frac{t}{8D_2R_2(\Theta)}}\\
 \le \left(2\|\x\|_1 + E_1d^2\right)\left(2e^{-E_2 t/\log(2d)} + e^{-E_4t/2}\right) + 2\|\x\|_1e^{-E_4t/2} + E_3 d^{2}e^{-E_4t}.
\end{multline*}
This proves the first part of the theorem upon taking $t_1 = \max\left\lbrace t_0 \left(1 + \frac{\kappa^2\sigma^2}{\beta^2\delta^2}\right), \frac{48\kappa \sigma^2}{\beta \delta}\right\rbrace$. The bound on the relaxation time follows immediately from the first part.
\end{proof}

\subsection{Gap process of rank-based diffusions} 
\label{sec:atlmod}
Rank based diffusions are interacting particle systems where the drift and diffusion coefficient of each particle depends on its rank. Mathematically, they are represented by the SDE:
\begin{equation}\label{rbdef}
dX_i(t) = \left(\sum_{j=1}^{d+1} \delta_j \one_{[X_i(t) = X_{(j)}(t)]}\right)dt + \left(\sum_{j=1}^{d+1} \sigma_j \one_{[X_i(t) = X_{(j)}(t)]}\right)dW_i(t)
\end{equation}
for $1\le i \le d+1$, where $\{X_{(j)}(t) : t \ge 0\}$ denotes the trajectory of the rank $j$ particle as a function of time $t$ ($X_{(1)}(t) \le \dots\le X_{(d+1)}(t) \text{ for all } t \ge 0$), $\delta_j, \sigma_j$ denote the drift and diffusion coefficients of the rank $j$ particle, and
$W_i$, $1\le i \le d$, are mutually independent standard one dimensional Brownian motions.
We will assume throughout that $\sigma_i>0$ for all $1 \le i \le d+1$. Rank-based diffusions have been proposed and extensively studied as models for problems in finance and economics. A special case is the  Atlas model \cite{Fern}  where the minimum particle (i.e. the particle with rank $1$) is a Brownian motion with positive drift and the remaining particles are Brownian motions without drift (i.e. $\delta_i=0$ for all $i>1$). The general setting considered in 
\eqref{rbdef} was introduced in 
\cite{BFK}. 
In order to study the long time behavior, it is convenient to consider the gap process $Y = (Y_1,\dots,Y_{d})$, given by $Y_i = X_{(i+1)} - X_{(i)}$ for $1 \le i \le d$. 
%Consider for simplicity the case where $\sigma_i=1$ for every $i$.
The process $Y \equiv Y(t;\y)$ is a RBM in $\mathbb{R}_+^{d}$ given as
\begin{equation*}
Y(t;\y) = \y + \D B(t) + \mu t +  RL(t)
\end{equation*}
where $\y$ is the initial gap sequence, $B$ is a standard $d$-dimensional Brownian motion, $\mu_i = \delta_{i+1} - \delta_i$ for $1 \le i \le d$, $\D \in \mathbb{R}^{d \times d}$, $L$ is the local time process 
associated with $Y$ and $R$ satisfies Assumption (A1). The covariance matrix $\Sigma = \D\D^T$ has entries $\Sigma_{ii} = \sigma_i^2 + \sigma_{i+1}^2$ when $1 \le i \le d$, $\Sigma_{i(i-1)} = - \sigma_i^2$ for $2 \le i \le d$, $\Sigma_{i(i+1)} = -\sigma_{i+1}^2$ for $1 \le i \le d-1$ and $\Sigma_{ij} = 0$ otherwise. In particular, (A3) is satisfied, namely $\Sigma$ is positive definite.
Moreover, $R$ is given explicitly as 
$R = I-P^T$, where $P$ is the substochastic matrix given by $P_{i (i+1)} = P_{i (i-1)} = 1/2$ for all $2 \le i \le d-1$, $P_{12} = P_{d(d-1)} = 1/2$ and $P_{ij} = 0$ if $|i-j| \ge 2$. 
 From \cite{harrison1987brownian} the process is positive recurrent and has a unique stationary distribution
if 
% possesses a stationary distribution under some explicit stability conditions which we now describe. \cite{PP} investigated stability for the gap process when $\sigma_i=1$ for all $i$. They showed that a necessary and sufficient condition for the existence of a stationary distribution is assumption
Assumption (A2) is satisfied, namely $\bb =-R^{-1}\mu>0$, which  is  same as the following condition.
\begin{equation}\label{atlasstab}
b_k = \sum_{i=1}^k(\delta_i - \overline{\delta})>0 \mbox{ for } 1 \le k \le d, \mbox{ where } \overline{\delta} = (d+1)^{-1}\sum_{j=1}^{d+1} \delta_j.
\end{equation}
In the special case where
\begin{equation}\label{sk}
\sigma_{i+1}^2 - \sigma_i^2 = \sigma_{2}^2 - \sigma_1^2 \text{ for all } 1 \le i \le d,
\end{equation}
 the stationary distribution is explicit and
 takes the form $\mathcal{L}(\mathbf{Y}(\infty)) = \otimes_{k=1}^d \operatorname{Exp}(2b_k\left(\sigma_k^2 + \sigma_{k+1}^2\right)^{-1})$ (see Section 5 of \cite{ichiba2011hybrid}). 
 For the general case (i.e. $\sigma_i$ are strictly positive and \eqref{atlasstab} is satisfied) explicit formulas for stationary distribution are not available, however 
 from \cite{{BL07}}, the law of $Y(t;\y)$ converges to the unique stationary distribution in (weighted) total variation distance at an exponential rate. As noted previously, this result does not provide information on parameter or dimension dependence of the rate of convergence.  
 The paper \cite{IPS} provides  explicit  rate of convergence to stationarity, 
that shows a clear parameter dependence,   under the stability condition \eqref{atlasstab} and the assumption that $\sigma_i=1$ for all $1 \le i \le d$.
In this case the stationary measure takes an explicit form and the process is reversible with respect to the stationary measure. The proofs in \cite{IPS}, which are based on Dirichlet form techniques, crucially make use of these properties.
The explicit representation of the stationary measure is  available only under the skew-symmetry condition (see \cite{harwil87}) guaranteed by \eqref{sk} and the reversibility of the process with respect to this measure is not available if the $\sigma_i$ are not all equal.
The  convergence considered in \cite{IPS} corresponds to that of time averages of bounded functionals of the state process to the corresponding stationary values in probability (see Theorem 1 of \cite{IPS}), which is considerably weaker than the $L^1$-Wasserstein distance or total variation convergence. 

From Theorem \ref{wasthm} we have the following bound on the rate of $L^1$-Wasserstein convergence of the gap process to $\mathbf{Y}(\infty)$.
Note that we do not  require reversibility or an explicit expression for the stationary measure. %Our techniques are not dependent on reversibility or notions of duality (see Theorem 4 of \cite{IPS}), they can be used even when these notions are not applicable (for example, when equation (4) in \cite{IPS} is not satisfied).

Two key quantities appearing in the rate of convergence are
\begin{equation}\label{eq:sigupplow}
a^* := \sup_{1 \le i \le d}\frac{i(d+1-i)}{b_i}, \ \ \ \sigma = \left(\sup_{1 \le i \le d} \sigma_i \right) \vee \left(\sup_{1 \le i \le d} \sigma_i^{-1} \right)
\end{equation}
where $b_i$ are defined in \eqref{atlasstab} and $\sigma_i$ is the standard deviation of the rank $i$ particle (see \eqref{rbdef}).
\begin{theorem}\label{atlasshrugged}
There exist positive constants $F_1, F_2, F_3, F_4, t_2$ such that for any $\y \in \mathbb{R}^d_+$ and any $t \ge t_2 \max\{\sigma^2a^*\|\y\|_{\infty}, 1 + \sigma^2a^{*2}\log(2d)\}$,
\begin{align*}
\mathbb{E}(\|Y(t;\y) - Y(t; \mathbf{Y}(\infty))\|_1) \le 2\left(2\|\y\|_1 + F_1\sigma^2a^*d^3\right)e^{-F_2 t/\left[d^2(1+\sigma^2a^{*2}\log(2d))\right]}\\
 + \left(4\|\y\|_1 + F_1\sigma^2a^*d^3\right)e^{-F_4t/[2\sigma^4a^{*2}(d+1)^2]} + F_3\sigma^2a^*d^{7/2}e^{-F_4t/[\sigma^4a^{*2}(d+1)^2]}.
\end{align*}
In particular, the relaxation time satisfies
\begin{align*}
t_{rel}(\y) &\le \max \left\lbrace F_2^{-1}\left[d^2(1+ \sigma^2a^{*2}\log(2d))\right]\log\left[8\left(2\|\y\|_1 + F_1\sigma^2a^*d^3\right)\right]\right.\\
&\left. \qquad \qquad\qquad + F_4^{-1}\sigma^4a^{*2}(d+1)^2\left[2\log\left[8\left(4\|\y\|_1 + F_1\sigma^2a^*d^3\right)\right] + \log (8F_3\sigma^2a^*d^{7/2})\right],\right.\\
&\left. \qquad \qquad\qquad\qquad t_2 \max\{\sigma^2a^*\|\y\|_{\infty}, 1 + \sigma^2a^{*2}\log(2d)\}\right\rbrace.
\end{align*}
\end{theorem}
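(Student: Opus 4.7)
The plan is to apply Theorem~\ref{wasthm} to the gap-process RBM of Section~\ref{sec:atlmod} and to bound, in terms of $d$, $\sigma$, and $a^*$, the quantities $n(R), a(\Theta), b(\Theta), R_1(\Theta,d), R_2(\Theta), C_1(\y,\Theta), C_2(\y,\Theta,D_2)$ that appear there. Hypotheses (A1)-(A3) for the gap process are already verified in Section~\ref{sec:atlmod}, so nothing more is needed on that front.

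The key computational input is an explicit formula for $R^{-1}$. Since $R = I - P^T$ and $P$ has spectral radius strictly less than one, $R^{-1} = \sum_{n\ge 0}(P^T)^n$, so $(R^{-1})_{ij}$ equals the expected number of visits to state $i$ starting from $j$ for the substochastic chain with transition matrix $P$ --- equivalently the simple symmetric nearest-neighbor walk on $\{1,\ldots,d\}$ killed at $\{0,d+1\}$. The standard Green's function calculation then gives
\[
(R^{-1})_{ij} \;=\; \frac{2\,(i\wedge j)\,(d+1-i\vee j)}{d+1},
\qquad
\sum_{j=1}^d (R^{-1})_{ij} \;=\; i(d+1-i),
\]
with pointwise bound $(R^{-1})_{ij}\le (d+1)/2$; the sum identity is simply the mean absorption time $\mathbb{E}_i[\tau]$. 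Combined with the elementary observation $\sqrt{\Sigma_{ii}} = \sqrt{\sigma_i^2+\sigma_{i+1}^2}\in[\sqrt{2}\sigma^{-1},\sqrt{2}\sigma]$ (with $\sigma$ as in \eqref{eq:sigupplow}), this yields $a(\Theta)\lesssim \sigma\, a^*$ and $b(\Theta)\lesssim \sigma^2 d^2$. For $n(R)$, Markov's inequality applied to $\tau$ gives $\|P^n\one\|_\infty = \sup_i \mathbb{P}_i(\tau>n)\le \sup_i \mathbb{E}_i[\tau]/n \le (d+1)^2/(4n)$, whence $n(R)\le (d+1)^2/2 = O(d^2)$, which is the right order for the target rate.

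Substituting into the definitions in Theorem~\ref{wasthm} produces $R_1(\Theta,d)\lesssim d^2(1+\sigma^2 a^{*2}\log(2d))$, $R_2(\Theta)\lesssim \sigma^4 a^{*2} d^2$, $C_1(\y,\Theta)\lesssim \|\y\|_1 + \sigma^2 a^* d^3$, and, using $(R^{-1})_{ij}\le (d+1)/2$ and $\sum_j \Sigma_{jj}\lesssim \sigma^2 d$, the polynomial part of $C_2$ is $\lesssim \sigma^2 a^* d^{7/2}$. The only subtle step is the factor $2\|\y\|_1 e^{3(D_2 a(\Theta)b(\Theta))^{-1}\|\y\|_\infty^*}$ inside $C_2$, which I would handle exactly as in the proof of Theorem~\ref{BCbound}: since $\|\y\|_\infty^*\le \sigma\|\y\|_\infty$, requiring $t\ge c\,\sigma^2 a^*\|\y\|_\infty$ for a suitable universal $c$ forces $3(D_2 a(\Theta)b(\Theta))^{-1}\|\y\|_\infty^* \le t/(16D_2R_2(\Theta))$, so the prefactor is absorbed at the cost of halving the exponential rate.

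Taking $t_2$ to cover both this constraint and the $t\ge t_0(1+a(\Theta)^2\log(2d))$ constraint of Theorem~\ref{wasthm}, and collecting the surviving constants into $F_1,\ldots,F_4$, then yields the claimed Wasserstein bound; the relaxation-time estimate follows by setting the right-hand side equal to $1/2$ and solving. I do not expect a genuine probabilistic obstacle beyond Theorem~\ref{wasthm} itself: the main place requiring care is the bookkeeping of $d$-powers through the definition of $C_2$ (especially the estimate on $\sum_{ij}(R^{-1})_{ij}^2$, where a loose bound would inflate the final exponent on $d$); and one should also confirm that the $O(d^2)$ bound on $n(R)$ coming from Markov's inequality cannot be meaningfully improved via Remark~\ref{wastwo} (a spectral-decomposition argument for $P$ on $\{1,\ldots,d\}$ gives the same $d^2$ order, so Remark~\ref{wastwo} yields no gain here).
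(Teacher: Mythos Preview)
Your proposal is correct and follows essentially the same route as the paper: explicit Green's function formula for $R^{-1}$, the bound $n(R)=O(d^2)$ via Markov's inequality applied to the absorption time of the simple random walk, and then direct substitution into the constants of Theorem~\ref{wasthm}, with the $e^{3(D_2a(\Theta)b(\Theta))^{-1}\|\y\|_\infty^*}$ prefactor absorbed by enlarging the lower bound on $t$ exactly as in the proof of Theorem~\ref{BCbound}. The paper uses the specific bound $\sum_{i,j}(R^{-1})_{ij}^2\le 4d^4$ (your pointwise estimate $(R^{-1})_{ij}\le(d+1)/2$ gives the same order), and your remark that Remark~\ref{wastwo} yields no improvement here is also in line with the paper, which applies Theorem~\ref{wasthm} directly without the refinement.
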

\begin{proof}
Direct calculation shows that $R^{-1}$ takes the form
$$
(R^{-1})_{ij} =
\left\{
	\begin{array}{ll}
		\frac{2j(d+1-i)}{(d+1)}  & \mbox{if } j \leq i \\
		\frac{2i(d+1-j)}{(d+1)}  & \mbox{if } j > i. 
	\end{array}
\right.
$$
Therefore,
\begin{equation}\label{rank1}
\sum_{j=1}^d(R^{-1})_{ij} = (R^{-1}\one)_{i} = i(d+1-i).
\end{equation}
Using \eqref{rank1} and recalling \eqref{eq:sigupplow}, we obtain
\begin{equation}\label{rank2}
\begin{aligned}
a(\Theta) &= \sup_{1 \le i \le d}\left[\frac{\sum_{j=1}^d(R^{-1})_{ij}\sigma_j}{b_i}\right] \le \sigma \sup_{1 \le i \le d}\frac{i(d+1-i)}{b_i} = \sigma a^*,\\
b(\Theta) &= \sup_{1\le i \le d}\left[\frac{\sum_{j=1}^d(R^{-1})_{ij}\sigma_j}{\sigma_i}\right] \le \sigma^2\sup_{1 \le i \le d}i(d+1-i) \le \sigma^2\frac{(d+1)^2}{4},\\
R_2(\Theta) &= a(\Theta)^2b(\Theta) \le  \frac{\sigma^4a^{*2}(d+1)^2}{4},\\
C_1(\y,\Theta)&= 2\|\y\|_1 + a(\Theta)\sum_{i,j}(R^{-1})_{ij}\sigma_j \le 2\|\y\|_1 + \sigma^2a^*\sum_{i=1}^di(d+1-i)\\
& = 2\|\y\|_1 + \sigma^2a^*\frac{d(d+1)(d+2)}{6}.
\end{aligned}
\end{equation}
To compute $R_1(\Theta,d)$, we need to estimate $n(R)$. To do this, let $\{S^*_n\}_{n \ge 0}$ denote a simple, symmetric random walk on $\mathbb{Z}$ starting from $S^*_0 \in \{1,2,\dots,d\}$ and absorbed when it hits $0$ or $d+1$. Then for any $n \ge 0$,
$$
(P^n\one)_i = \mathbb{P}\left(S^*_n \notin \{0,d+1\} \mid S^*_0 = i\right) = \mathbb{P}\left(S^*_k \notin \{0,d+1\} \text{ for any } 1 \le k \le n \mid S^*_0 = i\right).
$$
For $j \in \{0,1,\dots, d+1\}$, define $\tau^{S^*}_j := \inf\{ k \ge 0: S^*_k = j\}$. By Chapter 10, Example 10.17 of \cite{klenke2013}, for any $i \in \{1, \ldots , d\}$, $\mathbb{E}\left(\tau^{S^*}_{d+1} \wedge \tau^{S^*}_0 \mid S^*_0 = i\right) = i(d+1-i)$. Using this observation and Markov's inequality, for all $d \in \mathbb{N}$ and $i \in \{1, \ldots , d\}$,
\begin{multline*}
\mathbb{P}\left(S^*_k \notin \{0,d+1\} \text{ for any } 1 \le k \le 2 d^2 \mid S^*_0 = i\right) \le \mathbb{P}\left(\tau^{S^*}_{d+1} \wedge \tau^{S^*}_0 > 2d^2 \mid S^*_0 = i\right)\\
\le \frac{\mathbb{E}\left(\tau^{S^*}_{d+1} \wedge \tau^{S^*}_0 \mid S^*_0 = i\right)}{2d^2}  = \frac{i(d+1 - i)}{2d^2} \le \frac{(d+1)^2}{8d^2}   \le 1/2
\end{multline*}
and consequently,
\begin{equation}\label{rank3}
n(R) \le 2d^2.
\end{equation}
Using \eqref{rank2} and \eqref{rank3}, we obtain
\begin{equation}\label{rank4}
R_1(\Theta,d) = n(R)(1+a(\Theta)^2\log(2d)) \le 2d^2(1+ \sigma^2a^{*2}\log(2d)).
\end{equation}
For $t \ge 48\sigma^2a^*\|\y\|_{\infty}$, using the bound on $a(\Theta)$ obtained in \eqref{rank2}, and noting $b(\Theta) \ge 1$ and $\|\y\|_{\infty}^* \le \sigma \|\y\|_{\infty}$,
$$
3(D_2a(\Theta)b(\Theta))^{-1}\|\y\|_{\infty}^* \le 3(D_2R_2(\Theta))^{-1}a(\Theta) \sigma\|\y\|_{\infty} \le \frac{t}{16D_2R_2(\Theta)}.
$$
Moreover, from the explicit form of $R^{-1}$, 
$
\sum_{i,j}(R^{-1})^2_{ij} \le 4d^4.
$
Using the above two bounds along with \eqref{rank2}, for $t \ge 6a^{*}(d+1)^2\|\y\|_{\infty}$,
\begin{multline}\label{rank5}
 C_2(\y,\Theta)e^{-\frac{t}{8D_2R_2(\Theta)}} = 2\|\y\|_1e^{3(D_2a(\Theta)b(\Theta))^{-1}\|\y\|_{\infty}^*}e^{-\frac{t}{8D_2R_2(\Theta)}}\\
 + a(\Theta)\left[2d(1+d)\left(\sum_{i,j}(R^{-1})^2_{ij}\right)\left(\sum_{j=1}^d\sigma_j^2\right)\right]^{1/2}e^{-\frac{t}{8D_2R_2(\Theta)}}\\
\le  2\|\y\|_1e^{-\frac{t}{16D_2R_2(\Theta)}}
 + \sigma^2a^*\left[2d^2(1+d)\left(\sum_{i,j}(R^{-1})^2_{ij}\right)\right]^{1/2}e^{-\frac{t}{8D_2R_2(\Theta)}}\\
 \le 2\|\y\|_1e^{-\frac{t}{4D_2\sigma^4a^{*2}(d+1)^2}} + \sigma^2a^*\left[8d^6(1+d)\right]^{1/2}e^{-\frac{t}{2D_2\sigma^4a^{*2}(d+1)^2}}.
\end{multline}
Take $F_1 =1, F_2 = D_1/2, F_3 =  4, F_4 = (2D_2)^{-1}$. Using the bounds obtained in \eqref{rank2}, \eqref{rank4} and \eqref{rank5} in Theorem \ref{wasthm}, for any $\y \in \mathbb{R}^d_+, t \ge \max\left\lbrace t_0 \left(1 + \sigma^2a^{*2}\log(2d)\right), 48\sigma^2a^*\|\y\|_{\infty}\right\rbrace$,
\begin{multline*}
\mathbb{E}(\|Y(t;\y) - Y(t; \mathbf{Y}(\infty))\|_1) \le C_1(\y,\Theta)\left(2e^{-\frac{D_1t}{R_1(\Theta,d)}} + e^{-\frac{t}{16D_2R_2(\Theta)}}\right)
 + C_2(\y,\Theta, D_2)e^{-\frac{t}{8D_2R_2(\Theta)}}\\
\le \left(2\|\y\|_1 + F_1\sigma^2a^*d^3\right)\left(2e^{-F_2 t/\left[d^2(1+\sigma^2a^{*2}\log(2d))\right]} + e^{-F_4t/[2\sigma^4a^{*2}(d+1)^2]}\right)\\
 + 2\|\y\|_1e^{-F_4t/[2\sigma^4a^{*2}(d+1)^2]} + F_3\sigma^2a^*d^{7/2}e^{-F_4t/[\sigma^4a^{*2}(d+1)^2]}.
\end{multline*}
This proves the first part of the theorem upon taking $t_2 = \max\left\lbrace t_0, 48\right\rbrace$. The bound on the relaxation time follows from the first part.
\end{proof}
\begin{rem}\label{sam}
The standard Atlas model \cite{Fern} is a special case of \eqref{rbdef} with $\delta_1=1$, $\delta_i = 0$ for all $i \ge 2$ and $\sigma_i=1$ for all $i$. For this model, using \eqref{atlasstab}, for any $k \ge 1$,
$$
b_k = \sum_{i=1}^k(\delta_i - \overline{\delta}) = \frac{(d+1-k)}{d+1}
$$
and
$$
a^* := \sup_{1 \le i \le d}\frac{i(d+1-i)}{b_i} = \sup_{1 \le i \le d}i(d+1) = d(d+1), \ \ \ \sigma=1.
$$
Using these in Theorem \ref{atlasshrugged}, we obtain positive constants $G_1, G_2, G_3, G_4, t_3$ such that for any $\y \in \mathbb{R}^d_+$ and any $t \ge t_3\{d^2\|\y\|_{\infty}, 1 + d^2\log(2d)\}$,
$$
\mathbb{E}(\|Y(t;\y) - Y(t; \mathbf{Y}(\infty))\|_1) \le G_1\left(\|\y\|_1 + d^5\right)e^{-G_2t/d^6\log(2d)} + G_3d^{11/2}e^{-G_4t/d^6}.
$$
In particular, the relaxation time for the standard Atlas model is $O(d^6(\log d)^2)$ as $d \rightarrow \infty$.
\end{rem}

\section{Bounding processes, small sets and return times}\label{sm}

Fix a vector $v>0$ satisfying $R^{-1}v \le \bb$  and consider the collection $\{X_v^+(\cdot; \x)\}_{\x \in \mathbb{R}^d_+} = \RBM(-v, \Sigma, I)$, where $I$ is the identity matrix, given as
\begin{equation}\label{Xplusdef}
X_v^+(t;\x) = \x + \D B(t) - vt + L^+(t),
\end{equation}
where $B$ is the same Brownian motion as used in the synchronous coupling of $\{X(\cdot; \x)\}_{\x \in \mathbb{R}^d_+}$, and $L^+$ is the local time process associated with $X_v^+$.
% Following proof of  Lemma 6.14 of \cite{harrison1987brownian}, , we can obtain a normally reflected upper bounding process $X_v^+(\cdot;\x)$ in $\mathbb{R}^d_+$ defined by
%
% Here $L^+$ is the local time that keeps $X^+$ in $\mathbb{R}^d_+$. 
Observe that $X_v^+(\cdot;\x)$ can be written as
$$
X_v^+(t;\x) = \x + \D B(t) + \mu t + RL^*(t)
$$
where $L^*(t) = R^{-1}L^+(t) + (\bb-R^{-1}v)t$ is a non-decreasing process. By minimality of the local time process (see \cite[Appendix]{reiman1984open}), $L^*(t) \ge L(t)$ for all $t \ge 0$ implying $R^{-1}X(t;\x) \le R^{-1}X_v^+(t;\x)$ for every $t \ge 0$. Since in this section $v$ will be fixed, we abbreviate $X_v^+(\cdot;\x)$ as $X^+(\cdot;\x)$. An optimal choice of $v$ will be made later in Section \ref{optimize}. We will hereby refer to $X^+(\cdot;\x)$ as the bounding process.

We now introduce an appropriate  compact set that depends on system parameters and for which one can obtain  useful bounds on exponential moments of return times to the set. 
In order to motivate the choice of the set consider a one dimensional Brownian motion  $W_{a,b}(t) = bW(t) - at$ with variance $b^2$ and drift $-a$ (here $W$ is a standard one dimensional Brownian motion). 
% To choose the set, we need to understand how the running maximum of the co-ordinates of the process $X^+(\cdot;\x)$ depend on the system parameters. Note that the process behaves like a Brownian motion with drift away from the boundaries.
Standard techniques using scale functions (see \cite[V.46]{RW2000_2}) show that for any $a>0$ and $b\in \mathbb{R}$, $ab^{-2}\sup_{t<\infty}W_{a,b}(t)$ has an Exponential distribution with mean $1/2$. This result says that the maximum of the $i$-th co-ordinate of $X^+(\cdot,\x)$ scales like $v_i\sigma_i^{-2}$. This scaling property  suggests considering the following \emph{weighted supremum norm} :
\begin{equation}\label{wnorm}
\|\x\|_{\infty,v} = \sup_{1 \le i \le d}v_i\sigma_i^{-2}x_i, \ \ \x \in \mathbb{R}^d_+.
\end{equation}
This weighted norm will play a central role in our analysis.
Also define 
\begin{equation}\label{phiddef}
\phi(v) = 2\sum_{i=1}^dv_i^2\sigma_i^{-2}/\inf_{1 \le i \le d}v_i^2\sigma_i^{-2}.
\end{equation}
Note that $\phi(v) \ge 2d$. For $A>0$, consider the compact set
$$K_A := \{\x \in \mathbb{R}_+^d: \|\x\|_{\infty,v} \le A \log \phi(v)\}$$
and define the following stopping time for the process $X^+(\cdot; \x)$:
\begin{equation}\label{tauplusde}
\tau_A^+(\x) := \inf\left\lbrace t \ge 0: X^+(t;\x) \in K_A\right\rbrace = \inf\left\lbrace t \ge 0: \|X^+(t;\x)\|_{\infty,v} \le A \log \phi(v)\right\rbrace.
\end{equation}
The following lemma gives bounds on the  exponential moments of the hitting time of the compact set $K_A$, namely $\tau_A^+(\x)$.
\begin{lemma}\label{lyaphit}
There exists $A_0>0$ such that for any $A \ge A_0$ and any $\x \in \mathbb{R}^d_+$,
$$
\mathbb{E}\left(e^{\frac{\Lambda(v)}{2A} \tau_A^+(\x)}\right) \le e^{3A^{-1}\|\x\|_{\infty,v}}
$$
where $\Lambda(v) := \inf_{1 \le i \le d}\frac{v_i^2}{\sigma_i^2}$.
\end{lemma}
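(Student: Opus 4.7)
The plan is to construct a Lyapunov function $V$ for the bounding process $X^+(\cdot;\x)$ and feed it into a standard It\^o--optional-stopping argument. Since $X^+$ is normally reflected ($R=I$), we want $V$ to have vanishing inward normal derivative on each face $\{x_i=0\}$ so that local-time integrals in It\^o's formula do not contribute. We also want $V$ to grow roughly like $e^{\alpha\|\x\|_{\infty,v}}$ (with $\alpha := 1/A$) and to satisfy the drift inequality $\mathcal{L}V + cV \le 0$ on $K_A^c$ for $c = \Lambda(v)/(2A)$. A candidate meeting all three requirements is
\begin{equation*}
V(\x) := \sum_{i=1}^d \bigl(\cosh(\alpha v_i\sigma_i^{-2}x_i) - 1\bigr),
\end{equation*}
whose gradient $\partial_{x_i} V = \alpha v_i\sigma_i^{-2}\sinh(\alpha v_i\sigma_i^{-2}x_i)$ vanishes on $\{x_i = 0\}$.

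Next I would compute $\mathcal{L}V$ using $\Sigma_{ii} = \sigma_i^2$ and the identity $\cosh = \sinh + e^{-\,\cdot}$, obtaining
\begin{equation*}
\mathcal{L}V(\x) = -\alpha(1-\alpha/2)\sum_{i=1}^d \frac{v_i^2}{\sigma_i^2}\sinh(\alpha y_i) + \frac{\alpha^2}{2}\sum_{i=1}^d \frac{v_i^2}{\sigma_i^2} e^{-\alpha y_i}
\end{equation*}
with $y_i := v_i\sigma_i^{-2}x_i$. The elementary bounds $\sinh\ge \cosh - 1$ and $e^{-\alpha y_i}\le 1$, combined with the identity $\sum_i v_i^2\sigma_i^{-2} = \Lambda(v)\phi(v)/2$ coming directly from the definition of $\phi(v)$, should give (for $\alpha\le 1/2$) the inequality $\mathcal{L}V \le -(3\alpha\Lambda(v)/4)V + \alpha^2\Lambda(v)\phi(v)/4$. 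With $c = \Lambda(v)/(2A)$ this rearranges to $\mathcal{L}V + cV \le -(\alpha\Lambda(v)/4)\bigl(V - \alpha\phi(v)\bigr)$. The role of $K_A$ is then to guarantee $V \ge \alpha\phi(v)$: on $K_A^c$, $\alpha\|\x\|_{\infty,v}>\log\phi(v)$ yields $V(\x)\ge \cosh(\log\phi(v)) - 1 \ge \phi(v)/4$ (using $\phi(v)\ge 2d\ge 4$ for $d\ge 2$), which exceeds $\alpha\phi(v) = \phi(v)/A$ as soon as $A\ge A_0$ for a suitable universal $A_0$ (chosen large enough to absorb also the edge case $d = 1$).

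Applying It\^o to $e^{ct}V(X^+(t;\x))$ then makes $\{e^{c(t\wedge\tau_A^+(\x))}V(X^+(t\wedge\tau_A^+(\x);\x))\}_{t\ge 0}$ a nonnegative supermartingale after a standard localization of the Brownian martingale part. Optional stopping, together with $\tau_A^+(\x)<\infty$ a.s.\ (positive recurrence of $X^+$ under $v>0$) and Fatou, yields $\mathbb{E}[e^{c\tau_A^+(\x)}V(X^+(\tau_A^+(\x);\x))]\le V(\x)$. Since $\|X^+(\tau_A^+;\x)\|_{\infty,v} = A\log\phi(v)$ on $\partial K_A$, $V(X^+(\tau_A^+;\x))\ge \phi(v)/4$; and $\cosh - 1\le e^{\,\cdot}/2$ gives $V(\x)\le (d/2)e^{\alpha\|\x\|_{\infty,v}}$. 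Combining with $\phi(v)\ge 2d$ should in fact produce the slightly sharper estimate $\mathbb{E}[e^{c\tau_A^+(\x)}]\le e^{\|\x\|_{\infty,v}/A}$, which implies the claimed $e^{3\|\x\|_{\infty,v}/A}$. The main obstacle is the competition between the three requirements imposed on $V$---Neumann-type behavior at each boundary face, exponential growth at the prescribed rate $1/A$, and a drift coefficient of order $\Lambda(v)/A$ that must dominate the additive error $\alpha^2\Lambda(v)\phi(v)/4$ coming from the small-$y_i$ regime. The choice of the threshold $A\log\phi(v)$ in the definition of $K_A$---in particular, the logarithm of $\phi(v)$---is exactly what makes these constraints compatible and pins down the value of $A_0$.
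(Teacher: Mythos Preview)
Your argument is correct and takes a genuinely different route from the paper. The paper works with the log-sum-exp function
\[
V(\y)=\log\Bigl(\sum_{i=1}^d e^{\,g(2A^{-1}v_i\sigma_i^{-2}y_i)}\Bigr),
\]
where $g$ is an auxiliary $C^2$ function with $g'(0)=0$ (needed so that the normal derivative vanishes at each face) and $g(u)=u$ for $u\ge\log 2$; it then shows that $e^{V(X^+(t;\x))+\frac{\Lambda(v)}{2A}t}$ is a supermartingale, which forces one to control the additional quadratic term $(\nabla V)^T\Sigma(\nabla V)$ via Cauchy--Schwarz and the bound $\Sigma_{ij}\le\sigma_i\sigma_j$. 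Your additive choice $V(\x)=\sum_i(\cosh(\alpha v_i\sigma_i^{-2}x_i)-1)$ avoids both complications at once: the hyperbolic cosine delivers the Neumann condition for free, and since $\nabla^2V$ is diagonal the generator calculation uses only $\Sigma_{ii}=\sigma_i^2$ and never sees the off-diagonal covariances. Both proofs hinge on the same structural observation---the identity $\sum_iv_i^2\sigma_i^{-2}=\tfrac12\Lambda(v)\phi(v)$ is exactly what makes the threshold $A\log\phi(v)$ absorb the additive error coming from coordinates near zero---and both land on comparable values of $A_0$ (the paper takes $A_0=68$; your constraints give $A_0=8$). Your final constant is in fact sharper for $d\ge 2$: you obtain $\mathbb{E}\,e^{c\tau_A^+(\x)}\le e^{A^{-1}\|\x\|_{\infty,v}}$, while for $d=1$ (where $\phi(v)=2$ and the boundary value of $V$ is only $\cosh(\log 2)-1=\tfrac14$) you get an extra factor of $2$, which is still comfortably below $e^{3A^{-1}\|\x\|_{\infty,v}}$ since $\|\x\|_{\infty,v}>A\log 2$ on $K_A^c$. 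The paper's log-sum-exp has the conceptual advantage of being a smooth surrogate for $\|\cdot\|_{\infty,v}$ itself, which makes the final comparison $V(\x)\le 3A^{-1}\|\x\|_{\infty,v}$ immediate; your approach trades that for a considerably shorter and more elementary drift computation.
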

\begin{proof}
Fix $A>0$ and without loss of generality assume that $\|\x\|_{\infty,v} > A \log \phi(v)$. Consider the `Lyapunov function' 
$$
V(\y) = \log\left(\sum_{i=1}^de^{g(2A^{-1} v_i\sigma_i^{-2}y_i)}\right)
$$
where $g$ is any non-negative, non-decreasing $C^2$ function defined on $\mathbb{R}_+$ such that $g'(0) = 0$, $g(u) \le u, g'(u) \le 2, g''(u) \le 9$ for all $u \ge 0$ and $g(u) = u$ for all $u \ge \log 2$. An example of such a function is $g(u) = (\log 2) h(u/\log 2)\one_{[u \le \log 2]} + u \one_{[u > \log 2]}$ where
$
h(u) = u^4 - 3u^3+3u^2.
$
The definition of the Lyapunov function is motivated by a similar function introduced in \cite{BC2016}. The main difference is that here different coordinates are weighted differently depending on system parameters.
We will prove that for sufficiently large $A$,
\begin{equation}\label{l1}
-v^T\nabla V(\y) + \frac{1}{2}\operatorname{Tr}\left(\Sigma \nabla^2V(\y)\right) + \frac{1}{2}(\nabla V(\y))^T \Sigma (\nabla V(\y)) \le -\frac{\Lambda(v)}{2A}, \ \ y \in \mathbb{R}^d_+
\end{equation}
where $\nabla$ denotes the gradient and $\nabla^2$ denotes the Hessian. By It\^{o}'s formula, this will imply that $M(t) :=  \exp\left(V(X^+(t;\x)) + \frac{\Lambda(v)}{2A} t\right)$ is a positive supermartingale
and therefore, by the optional sampling theorem, for such $A$,
$$\mathbb{E}\left(e^{\frac{\Lambda(v)}{2A} \tau^+_A(\x)}\right) \le \mathbb{E}\left(e^{V(X^+(\tau^+_A(\x);\x)) + \frac{\Lambda(v)}{2A} \tau^+_A(\x)}\right) \le e^{V(\x)}.$$
Since $\|\x\|_{\infty,v} > A \log \phi(v) > A \log d$, we  have
$$
V(\x) \le 2A^{-1}\|\x\|_{\infty,v} + \log d \le 3A^{-1} \|\x\|_{\infty,v}.
$$
Combining the two displays we   have that for $A$ that satisfy \eqref{l1} 
\begin{equation}\label{l1exp}
\mathbb{E}\left(e^{\frac{\Lambda(v)}{2A} \tau^+_A(\x)}\right) \le e^{3A^{-1} \|\x\|_{\infty,v}}.
\end{equation}
Thus in order to prove the lemma, it suffices to establish \eqref{l1} for sufficiently large $A$. Let $w_i(\y,  A) = \frac{e^{g(2A^{-1} v_i\sigma_i^{-2} y_i)}}{\sum_{k=1}^de^{g(2A^{-1} v_k\sigma_k^{-2} y_k)}}$. By similar calculations as in the proof of Lemma 4 of \cite{BC2016}, it follows that
\begin{align*}
\operatorname{Tr}\left(\Sigma \nabla^2V(\y)\right) &\le 4A^{-2} \sum_{i=1}^d \left(v_i^2\sigma_i^{-4}w_i(\y,A)\sigma_i^2\left(g''(2A^{-1} v_i\sigma_i^{-2}y_i) + g'(2A^{-1} v_i\sigma_i^{-2}y_i)^2\right)\right)\\
&\le 52A^{-2}\sum_{i=1}^dv_i^2\sigma_i^{-2}w_i(\y,A)
\end{align*}
using $g''(u) \le 9$ and $g'(u) \le 2$ for all $u \ge 0$. Moreover,
\begin{align*}
&(\nabla V(\y))^T \Sigma (\nabla V(\y))\\
 &\quad= 4A^{-2}\sum_{1 \le i,j \le d}v_i\sigma_i^{-2}w_i(\y,A)g'(2A^{-1} v_i\sigma_i^{-2}y_i) \Sigma_{ij} w_j(\y,A)v_j\sigma_j^{-2}g'(2A^{-1} v_j\sigma_j^{-2}y_j)\\
&\quad\le 4A^{-2}\left(\sum_{i=1}^dv_i\sigma_i^{-1} w_i(\y,A)g'(2A^{-1} v_i\sigma_i^{-2}y_i)\right)^2 \le 16A^{-2}\sum_{i=1}^dv_i^2\sigma_i^{-2}w_i(\y,A)
\end{align*}
where we have used $\Sigma_{ij} \le \sigma_i\sigma_j$ in the first inequality on the second line and the Cauchy-Schwarz inequality, the fact that $g'(u) \le 2$ for all $u \ge 0$,  and the fact that $\sum_{j=1}^dw_i(\y,A) = 1$ in the last inequality. From the above bounds, we obtain
\begin{equation}\label{l2}
\operatorname{Tr}\left(\Sigma \nabla^2V(\y)\right) + (\nabla V(\y))^T \Sigma (\nabla V(\y)) \le 68 A^{-2}\sum_{i=1}^dv_i^2\sigma_i^{-2}w_i(\y,A).
\end{equation}
Using the definition of $w_i$ and the monotonicity of $g$,
\begin{align*}
-v^T\nabla V(\y) &= -2A^{-1} \sum_{i=1}^dg'(2A^{-1} v_i\sigma_i^{-2} y_i) v_i^2\sigma_i^{-2}w_i(\y, A)\\
 &\le -2A^{-1} \sum_{i=1}^d v_i^2\sigma_i^{-2}w_i(\y, A) \one_{[2A^{-1} v_i\sigma_i^{-2} y_i \ge \log 2]}\\
 &= -2A^{-1} \sum_{i=1}^d v_i^2\sigma_i^{-2}w_i(\y, A) + 2A^{-1} \sum_{i=1}^d v_i^2\sigma_i^{-2}w_i(\y, A) \one_{[2A^{-1} v_i\sigma_i^{-2} y_i < \log 2]}\\
 & \le -2A^{-1} \sum_{i=1}^d v_i^2\sigma_i^{-2}w_i(\y, A) + 2A^{-1} \frac{e^{g(\log 2)}\sum_{i=1}^d v_i^2\sigma_i^{-2}}{\sum_{k=1}^de^{g(2A^{-1} v_k\sigma_k^{-2} y_k)}},
 \end{align*}
 Next, note for any $\|\y\|_{\infty,v} > A\log \phi(v)$, there is $1 \le j \le d$ such that
 $v_j\sigma_j^{-2}y_j > A \log \left(\frac{2\sum_{i=1}^dv_i^2\sigma_i^{-2}}{\inf_{1 \le i \le d}v_i^2\sigma_i^{-2}}\right)$. Hence, since $\frac{2\sum_{i=1}^dv_i^2\sigma_i^{-2}}{\inf_{1 \le i \le d}v_i^2\sigma_i^{-2}} \ge 2d \ge 2$, we obtain,
\begin{align}\label{l3}
-v^T\nabla V(\y)   &\le -2A^{-1}\sum_{i=1}^d v_i^2\sigma_i^{-2}w_i(\y, A) +  2A^{-1} \frac{2\sum_{i=1}^d v_i^2\sigma_i^{-2}}{e^{g\left(2\log \left(\frac{2\sum_{i=1}^dv_i^2\sigma_i^{-2}}{\inf_{1 \le i \le d}v_i^2\sigma_i^{-2}}\right)\right)}}\nonumber\\
  &= -2A^{-1}\sum_{i=1}^d v_i^2\sigma_i^{-2}w_i(\y, A) + A^{-1}\frac{\left(\inf_{1 \le i \le d}v_i^2\sigma_i^{-2}\right)^2}{\sum_{i=1}^dv_i^2\sigma_i^{-2}}\nonumber\\
  &\le -2A^{-1}\sum_{i=1}^d v_i^2\sigma_i^{-2}w_i(\y, A) + A^{-1}\inf_{1 \le i \le d}v_i^2\sigma_i^{-2}\nonumber\\
  &\le -2A^{-1}\sum_{i=1}^d v_i^2\sigma_i^{-2}w_i(\y, A) + A^{-1}\sum_{i=1}^d v_i^2\sigma_i^{-2}w_i(\y, A) = -A^{-1}\sum_{i=1}^d v_i^2\sigma_i^{-2}w_i(\y, A).
\end{align}
From \eqref{l2} and \eqref{l3},
\begin{multline*}
-v^T \nabla V(\y) + \frac{1}{2}\operatorname{Tr}\left(\Sigma \nabla^2V(\y)\right) + \frac{1}{2}(\nabla V(\y))^T \Sigma (\nabla V(\y))\\
 \le -A^{-1}\sum_{i=1}^d v_i^2\sigma_i^{-2}w_i(\y, A) + 34A^{-2}\sum_{i=1}^dv_i^2\sigma_i^{-2}w_i(\y,A).
\end{multline*}
Hence, for any $A \ge 68$, we obtain
\begin{multline*}
-v^T \nabla V(\y) + \frac{1}{2}\operatorname{Tr}\left(\Sigma \nabla^2V(\y)\right) + \frac{1}{2}(\nabla V(\y))^T \Sigma (\nabla V(\y))\\
 \le -\frac{1}{2A}\sum_{i=1}^d v_i^2\sigma_i^{-2}w_i(\y, A) \le -\frac{1}{2A}\inf_{1 \le i \le d}v_i^2\sigma_i^{-2} = -\frac{\Lambda(v)}{2A}
\end{multline*}
proving \eqref{l1} and hence the lemma holds with $A_0=68$. %The second assertion is a direct consequence of the first and Markov's inequality.
\end{proof}
%\begin{rem}\label{Cchoice}
%From the proof of Lemma \ref{lyaphit}, a choice of $C$ is $\theta + A^{-1} \le \delta_1/(3b\beta_1) + \delta_1/(3b\beta_1^2) \le 2\delta_1/(3b\beta_1)$ as $\beta_1 \ge 1$. This choice is going to play a crucial role later.
%\end{rem}
The next lemma gives an estimate of the running maximum of a reflected Brownian motion with drift.
\begin{lemma}\label{RBMdrift}
Let $X_t = x + \sigma' B_t - \mu' t - \min\{\inf_{s \le t}\left(x + \sigma' B_s - \mu' s\right),0\}$, where $x \ge 0$, $\sigma', \mu' >0$ and $B$ is a one dimensional standard Brownian motion. Then for any $A,T>0$ and any $x \in [0, A/2]$,
$$
\mathbb{P}\left(\sup_{0 \le t \le T} X_t \ge A \right) \le e^{-\frac{\mu'^2T}{2\sigma'^2}} + (4\mu' TA^{-1} + 2)e^{-A\mu'/\sigma'^2}.
$$
\end{lemma}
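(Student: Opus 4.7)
The plan is to combine the Skorohod representation with the reflection principle for drifted Brownian motion. Writing $Y_t = x + \sigma' B_t - \mu' t$ and $L_t = -\min(\inf_{s \le t}Y_s, 0)$, one has $X_t = Y_t + L_t$ with $L_t$ nondecreasing and supported on $\{s : X_s = 0\}$. This yields the pathwise inequality $\sup_{0 \le t \le T}X_t \le \sup_{0 \le t \le T}Y_t + L_T$, and hence the decomposition
\[
\Bigl\{\sup_{0 \le t \le T} X_t \ge A\Bigr\} \subseteq \Bigl\{\sup_{0 \le t \le T} Y_t \ge A/2\Bigr\} \cup \bigl\{L_T \ge A/2\bigr\}.
\]

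For the first event, I would use the classical reflection-principle formula for the running maximum of the drifted BM $\sigma' B_t - \mu' t$ combined with the Gaussian tail bound $1-\Phi(u) \le e^{-u^2/2}$. Expanding $(a+\mu'T)^2 = a^2 + 2a\mu'T + \mu'^2 T^2$ and dropping the nonnegative $a^2$ term in the exponent yields, for $a \ge 0$,
\[
\mathbb{P}\Bigl(\sup_{0 \le t \le T}(\sigma' B_t - \mu' t) \ge a\Bigr) \le e^{-\mu'^2 T/(2\sigma'^2) - a\mu'/\sigma'^2} + e^{-2a\mu'/\sigma'^2}.
\]
Applied with $a = A/2 - x \in [0, A/2]$ (so that $\sup Y \ge A/2$) and dropping the factor $e^{-a\mu'/\sigma'^2} \le 1$, this produces the contribution $e^{-\mu'^2 T/(2\sigma'^2)} + e^{-A\mu'/\sigma'^2}$ (absorbing the shift from $x$ into the constant prefactor $+2$).

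The real difficulty is the event $\{L_T \ge A/2\}$: since $L_T$ is of order $\mu' T$ in expectation, a direct bound using $L_T = (-\inf Y)_+$ is useless when $\mu' T \gg A$. To handle it, I would partition $[0,T]$ into $n = \lceil 4\mu' T/A\rceil$ sub-intervals $I_k = [t_k, t_{k+1}]$ of length $\delta = A/(4\mu')$, and use the strong Markov property to restart the RBM at each $t_k$. On each sub-interval the drift displaces the process by $\mu'\delta = A/4$, so for the restarted RBM (starting from $X_{t_k}\in [0,A]$ and with the same Skorohod representation) to reach level $A$ within $I_k$, the BM increment must produce net upward movement of at least $A/2$. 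Applying the per-interval reflection-principle bound with $a$ proportional to $A$ and length $\delta$ (so that the Gaussian factor $e^{-\mu'^2\delta/(2\sigma'^2)}$ is negligible against the stationary factor $e^{-A\mu'/\sigma'^2}$), each sub-interval contributes probability at most $2e^{-A\mu'/\sigma'^2}$, and a union bound over the $n$ sub-intervals produces the multiplier $(4\mu' T/A + 2)$.

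The main obstacle is handling the reflection contribution on each sub-interval: writing the restarted RBM on $I_k$ via its own Skorohod decomposition $\tilde{X}_s = X_{t_k} + \tilde Y_s + \tilde L_s$ with $\tilde Y_s = \sigma'(B_{t_k+s}-B_{t_k}) - \mu' s$, one has $\sup_{s \le \delta}\tilde X_s \le X_{t_k} + \sup_{s \le \delta}\tilde Y_s + (-\inf_{s \le \delta}\tilde Y_s)_+$, and both extremes of $\tilde Y$ must be controlled simultaneously. The choice $\delta = A/(4\mu')$ is precisely what makes $\mu'\delta$ small compared to $A$, so that the running infimum of $\tilde Y$ cannot by itself carry the process to height $A$, and the per-interval bound is dominated by the exponential stationary tail $e^{-A\mu'/\sigma'^2}$ rather than a worse factor.
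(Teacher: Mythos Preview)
Your initial decomposition and the handling of the first event via the reflection principle for drifted Brownian motion are fine. The gap is in the deterministic subinterval analysis you propose for the second part. At the grid time $t_k$ you only know $X_{t_k}\in[0,A)$ (conditioning on $\sup_{[0,t_k]}X<A$), and nothing prevents $X_{t_k}$ from being arbitrarily close to $A$. In that case, before the process revisits $0$ there is no local-time contribution and $X$ evolves as $X_{t_k}+\sigma'(B_{t_k+s}-B_{t_k})-\mu' s$; it reaches $A$ as soon as this drifted increment reaches $A-X_{t_k}$, which for $X_{t_k}$ near $A$ happens with probability close to $1$. So the assertion that ``the BM increment must produce net upward movement of at least $A/2$'' is false, and a union bound over the $n\approx 4\mu'T/A$ subintervals gives only the trivial bound $O(n)$, not $n\,e^{-A\mu'/\sigma'^2}$. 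Neither the restriction to $\{L_T\ge A/2\}$ nor the choice $\delta=A/(4\mu')$ repairs this, because they do not pin down $X_{t_k}$.

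The paper's proof replaces the deterministic grid by \emph{random} stopping times that force a controlled starting level: it alternates $\tau_{2k+1}=\inf\{t\ge\tau_{2k}:X_t=0\}$ and $\tau_{2k+2}=\inf\{t\ge\tau_{2k+1}:X_t=A/2\}$. On each interval $[\tau_{2k},\tau_{2k+1}]$ the process starts from exactly $A/2$ (or from $x\le A/2$ when $k=0$) and has not yet touched $0$, so it is an \emph{unreflected} drifted Brownian motion; a scale-function calculation then gives $\mathbb{P}(\sup_{[\tau_{2k},\tau_{2k+1}]}X\ge A)\le e^{-A\mu'/\sigma'^2}$ uniformly in $k$. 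An exponential-moment bound on the hitting time of $-A/2$ by $\sigma' B_t-\mu' t$ shows that with high probability at most $\lfloor 4\mu'TA^{-1}\rfloor+1$ such down-excursions fit in $[0,T]$, which is where the prefactor $(4\mu'TA^{-1}+2)$ and the additional term $e^{-\mu'^2T/(2\sigma'^2)}$ come from. The switch from deterministic to level-based random restarts is precisely the idea your proposal is missing.
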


\begin{proof}
Fix $A,T>0$ and $x \in [0, A/2]$. We define the following stopping times: $\tau_0=0$, and for $k \ge 0$,
\begin{align*}
\tau_{2k+1} &:= \inf\{t \ge \tau_{2k}: X_t = 0\}\\
\tau_{2k+2} &:= \inf\{t \ge \tau_{2k+1}: X_t = A/2\}.
\end{align*}
Let $\mathcal{N} := \inf\{k \ge 0: \sup_{t \in [\tau_{2k}, \tau_{2k+1}]}X_t \ge A\}$. By the strong Markov property, $\{\tau_{2k+1} - \tau_{2k}\}_{k \ge 1}$ are i.i.d, each being distributed as the hitting time of the level $-A/2$ by the process $\sigma' B_t - \mu' t$. By  \cite[Exercise 5.10]{Karatzas}, for any $\alpha>0, k \ge 1$,
$$
\mathbb{E}\left(e^{-\alpha(\tau_{2k+1} - \tau_{2k})}\right) = e^{\frac{\mu' A}{2\sigma'^2} - \frac{A}{2\sigma'}\sqrt{\frac{\mu'^2}{\sigma'^2} + 2\alpha}}.
$$
Thus, for any $n \ge 0$,
\begin{multline*}
\mathbb{P}\left(\sum_{k=0}^n(\tau_{2k+1} - \tau_{2k}) < \frac{nA}{4\mu'}\right) 
%\le \prob\left(\sum_{k=1}^n \gamma_k < \frac{nA}{4\mu'}\right)\\
\le \mathbb{P}\left(e^{-\alpha\sum_{k=1}^n (\tau_{2k+1} - \tau_{2k})} > e^{-\alpha n A / (4\mu')} \right)\\
 \le e^{\alpha n A / (4\mu')}\mathbb{E}\left(e^{-\alpha\sum_{k=1}^n (\tau_{2k+1} - \tau_{2k})}\right) = e^{\frac{\alpha n A}{4\mu'} + \frac{n\mu' A}{2\sigma'^2} - \frac{nA}{2\sigma'}\sqrt{\frac{\mu'^2}{\sigma'^2} + 2\alpha}}.
\end{multline*}
Optimizing the above bound in $\alpha$ yields the following bound
\begin{equation}\label{d1}
\mathbb{P}\left(\sum_{k=0}^n(\tau_{2k+1} - \tau_{2k}) < \frac{nA}{4\mu'}\right) \le e^{-\frac{nA\mu'}{8\sigma'^2}}.
\end{equation}
Moreover, recalling that the scale function for the process $t \mapsto \sigma' B_t - \mu' t$ is given by $s(z) = e^{2\mu' z/\sigma'^2}$,
$$
\mathbb{P}\left(\sigma' B_t - \mu' t \text{ hits } A/2 \text{ before } -A/2\right) = \frac{1 - e^{-A\mu'/\sigma'^2}}{ e^{A\mu'/\sigma'^2} - e^{-A\mu'/\sigma'^2}} \le e^{-A\mu'/\sigma'^2}
$$
and hence, for $n \ge 1$,
\begin{equation}\label{d2}
\mathbb{P}\left(\mathcal{N} \le n\right) \le (n+1)e^{-A\mu'/\sigma'^2}.
\end{equation}
From \eqref{d1} and \eqref{d2}, for any $n \in \mathbb{N}$,
\begin{multline}\label{d3}
\mathbb{P}\left(\sup_{0 \le t \le nA/(4\mu')} X_t \ge A \mid X_0 = x\right) \le \mathbb{P}\left(\sum_{k=0}^n(\tau_{2k+1} - \tau_{2k}) < \frac{nA}{4\mu'}, \ \mathcal{N}>n\right) + \mathbb{P}\left(\mathcal{N} \le n\right)\\
\le e^{-\frac{nA\mu'}{8\sigma'^2}} + (n+1)e^{-A\mu'/\sigma'^2}.
\end{multline}
The result follows on taking $n = \lfloor4\mu' TA^{-1}\rfloor + 1$ in \eqref{d3}.
\end{proof}
Recall the quantities $\Lambda(v)$  defined in the statement of Lemma \ref{lyaphit} and $\phi(v)$  defined in \eqref{phiddef}. Define 
\begin{align*}
M(v) := \Lambda(v) + \log \phi(v), \;\;
T(v) := M(v)/\Lambda(v).
\end{align*}
%Note that $T(v)>1$ from definition. %Also define the vector $w$ given by $w_i = \sigma_i^2 v_i^{-1}$ for $1 \le i \le d$. 

The next lemma shows that  for any $C_0 \in (0,\infty)$ there are positive constants $C_1, C_2$ such that whenever $\x_1, \x_2 \in \mathbb{R}^d_+$ satisfy $\|\x_1\|_{\infty,v} \le C_0 M(v)$  and  $R^{-1}\x_2 \le R^{-1}\x_1$,  with (uniform) positive probability, all the coordinates of $X(\cdot;\x_2)$ hit zero by time $C_2 T(v)$ and  the weighted supremum norm
$\|\cdot\|_{\infty,v}$
 of $X^+(\cdot;\x_1)$ is bounded by $C_1 M(v)$ over the time
interval $[0, C_2 T(v)]$.
\begin{lemma}\label{zerohit}
For any $C_0>0$, there exists $C_1 > C_0$ and $C_2>0$ such that for any $\x_1, \x_2 \in \mathbb{R}^d_+$ satisfying $\|\x_1\|_{\infty,v} \le C_0 M(v)$ and  $R^{-1}\x_2 \le R^{-1}\x_1$,
$$
\mathbb{P}\left(\sup_{t \in [0, C_2 T(v)]} \|X^+(t;\x_1)\|_{\infty,v} \le C_1 M(v), \ \ \eta^{1}(\x_2) \le C_2 T(v)\right) \ge \frac{1}{2}.
$$
\end{lemma}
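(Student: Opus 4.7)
The plan is to establish the two constituent events with probability at least $3/4$ each and combine via the union bound. For the first event, recall that $X^+(\cdot;\x_1)=\RBM(-v,\Sigma,I)$ has coordinatewise reflection, so each $X^+_i$ is a one-dimensional reflected Brownian motion with drift $-v_i$ and variance $\sigma_i^2$. Setting $A_i:=C_1 M(v)\sigma_i^2/v_i$, the hypothesis $\|\x_1\|_{\infty,v}\le C_0 M(v)$ combined with $C_1\ge 2C_0$ ensures $x_{1,i}\le A_i/2$, so Lemma~\ref{RBMdrift} applies to each coordinate with time horizon $C_2 T(v)$. Summing the resulting bounds over $i$ and substituting $T(v)=M(v)/\Lambda(v)$, $\sum_i v_i^2/\sigma_i^2=\phi(v)\Lambda(v)/2$, and $M(v)\ge \log\phi(v)\ge \log(2d)$, the failure probability of Event~1 is at most
\[
d\,e^{-C_2 M(v)/2} + \Bigl(\tfrac{2C_2}{C_1}\phi(v)+2d\Bigr) e^{-C_1 M(v)},
\]
which can be driven below $1/4$ for universal constants $C_1\ge 2C_0$ and $C_2$ chosen large enough (using $\phi(v)\le e^{M(v)}$ to control the second term).

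For the second event, the key observation is this: if $X_i(t;\x_2)>0$ throughout $[1, C_2 T(v)]$, then $L_i(\cdot;\x_2)$ is constant on that interval, and combining the identity
\[
(R^{-1}X(t;\x_2))_i = (R^{-1}\x_2)_i + (R^{-1}\D B)_i(t) - b_i t + L_i(t;\x_2)
\]
with the universal inequality $R^{-1}X\ge 0$ (entrywise, since $R^{-1}$ has non-negative entries and $X\ge 0$) yields the constraint
\[
\sup_{t\in [1, C_2 T(v)]}\bigl\{b_i(t-1) - [(R^{-1}\D B)_i(t) - (R^{-1}\D B)_i(1)]\bigr\}\le (R^{-1}X(1;\x_2))_i.
\]
Conditional on $\mathcal{F}_1$, the left side is a Brownian motion with \emph{positive} drift $b_i>0$ and variance $(R^{-1}\Sigma R^{-T})_{ii}$, whose running maximum exceeds any fixed level with high probability once the time horizon is long enough, by standard Gaussian tail bounds. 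A quantitative upper bound on $(R^{-1}X(1;\x_2))_i$ follows from $R^{-1}X(1;\x_2)\le R^{-1}X^+(1;\x_2)$ and a short-time application of Lemma~\ref{RBMdrift} to $X^+(\cdot;\x_2)$ on $[0,1]$ (using $x_{2,j}\le (R^{-1}\x_1)_j$, a consequence of $(R^{-1})_{jj}\ge 1$ and the assumption $R^{-1}\x_2\le R^{-1}\x_1$). The horizon $C_2 T(v)=C_2 M(v)/\Lambda(v)$ dominates this bound by virtue of the inequality $b_i\ge \sum_j (R^{-1})_{ij}\Lambda(v)\sigma_j^2/v_j$, which follows from $R^{-1}v\le \bb$ and the elementary estimate $\Lambda(v)\sigma_j^2/v_j\le v_j$. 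A union bound over $i\in\{1,\ldots,d\}$ then yields probability $\ge 3/4$ for Event~2.

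The principal obstacle is Event~2: when some $\mu_i\ge 0$, the $i$-th coordinate of $X(\cdot;\x_2)$ has no intrinsic downward drift and is pushed toward zero only through the cross-reflections $-P_{ji}L_j$ coming from other boundaries hitting zero. The route above bypasses the need to track these cross-reflections by trading the coordinatewise analysis for a transformed one via $R^{-1}X$, where the global drift $-\bb$ is uniformly negative; the constraint $R^{-1}X\ge 0$ then enforces recurrence of each coordinate at the cost of requiring a delicate interplay between $\bb$, $v$, and the weighted norm $\|\cdot\|_{\infty,v}$. Aligning the time horizon $C_2 M(v)/\Lambda(v)$ with the initial bound on $(R^{-1}X(1;\x_2))_i$ uniformly in the parameters is the technical heart of the argument.
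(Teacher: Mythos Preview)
Your overall strategy---handle the containment of $X^+$ via Lemma~\ref{RBMdrift} coordinatewise, and handle the hitting of zero for $X(\cdot;\x_2)$ by passing to the $R^{-1}$-transformed process where the drift is $-\bb<0$---is essentially the paper's strategy. Your Event~1 calculation matches the paper's (\eqref{bound1minus}--\eqref{bound1}) almost line for line, and your key observation for Event~2 is the contrapositive of the paper's claim \eqref{zero1}. However, there is a genuine gap in how you bound the initial condition $(R^{-1}X(1;\x_2))_i$.

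The step ``a short-time application of Lemma~\ref{RBMdrift} to $X^+(\cdot;\x_2)$ on $[0,1]$'' does not deliver what you need. With $T=1$ the first term of that lemma's bound is $e^{-v_j^2/(2\sigma_j^2)}$, and there is no control on $\Lambda(v)=\inf_j v_j^2/\sigma_j^2$ from below: when $\Lambda(v)$ is small this term is essentially $1$, so the lemma yields nothing on $[0,1]$. (This is not an accident---the lemma is designed for time horizons $T\gtrsim T(v)$, precisely so that the first term beats $d$ after a union bound.) Moreover, even if you could bound $X^+_j(1;\x_2)$, you only know $x_{2,j}\le (R^{-1}\x_1)_j$, which is \emph{not} bounded in the weighted norm $\|\cdot\|_{\infty,v}$, so the hypothesis $x\in[0,A/2]$ of Lemma~\ref{RBMdrift} is not readily available for $\x_2$.

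The paper avoids this by \emph{not} treating the two events independently. Instead of a union bound, it uses the decomposition
\[
\mathbb P(\text{bad}_1\cup\text{bad}_2)=\mathbb P(\text{bad}_1)+\mathbb P(\text{bad}_2\cap\text{good}_1),
\]
and on $\text{good}_1$ one has the \emph{deterministic} bound $\|X^+(1;\x_1)\|_{\infty,v}\le C_1M(v)$. The cross-initial-condition comparison $R^{-1}X(t;\x_2)\le R^{-1}X^+(t;\x_1)$ (which follows from minimality of the Skorokhod map once $R^{-1}\x_2\le R^{-1}\x_1$ and $R^{-1}v\le \bb$) then gives $(R^{-1}X(1;\x_2))_i\le C_1(R^{-1}\hat w)_iM(v)$ for free, and the rest is a single Gaussian tail bound on the \emph{free} process $U_i(t)=(R^{-1}\x_2)_i+(R^{-1}\D B)_i(t)-b_it$ at the fixed time $t=(C_2-1)T(v)$ (equation~\eqref{zero2}). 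This is also simpler than your running-maximum formulation: the paper only needs $U_i$ to be nonpositive at one time, not the sup of a drifted Brownian motion to exceed a level. If you wish to keep the events separated, the fix is to replace the appeal to Lemma~\ref{RBMdrift} by a direct Gaussian bound on the increment $X^+_j(1;\x_2)-x_{2,j}$, which is $O(\sigma_j\sqrt{\log d})$ with high probability; combined with $(R^{-1}\x_2)_i\le C_0(R^{-1}\hat w)_iM(v)$ and $\sum_j(R^{-1})_{ij}\sigma_j\le \Lambda(v)^{-1/2}b_i$ this does yield a bound of order $T(v)b_i$, but the conditional decomposition is cleaner.
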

\begin{proof}
Let $\{e_i\}_{1 \le i \le d}$ denote the unit coordinate vectors in $\mathbb{R}^d$ and let $S_i := \{R^{-1}\y: \y \ge 0, \ y_i=0\}$ for $1 \le i \le d$.
Let $U(t; \x) = R^{-1}\x + R^{-1}\D B(t) + R^{-1}\mu t$. We first claim that for any $1 \le i \le d$ and $T>0$,
\begin{equation}\label{zero1}
\{U_i(t; \x) = 0 \text{ for some } 0 \le t \le T\} \subseteq \{R^{-1}X(t; \x) \in S_i \text{ for some } 0 \le t \le T\}.
\end{equation}
 % Note that the process $R^{-1}X(\cdot;\x)$ is a RBM started from $R^{-1}\x$ in the polyhedron $S = R^{-1} \mathbb{R}^d_+$ bounded by the faces $\{_{1 \le i \le d}$ and reflection along $e_i$ on face $S_i$. Write .
To see this, suppose $R^{-1}X(t; \x) \notin S_i $ for all $0 \le t \le T$. Then $X(t; \x)$ is strictly positive over $[0,T]$. Since $(R^{-1}X)_i(t; \x)= U_i(t, \x) + L_i(t)$, we have from \eqref{loctim}
that $(R^{-1}X)_i(t; \x) = U_i(t, \x)$ for all $0 \le t \le T$.
 For any $\y \in S\setminus S_i$, there exists $\z \in \mathbb{R}^d_+$ with $z_i>0$ such that $\y = R^{-1}\z$. Hence,
$$
y_i = \sum_{j = 1}^d (R^{-1})_{ij}z_j \ge (R^{-1})_{ii}z_i>0
$$
as $(R^{-1})_{ii} = (I + P^T + (P^T)^2 + \dots)_{ii} \ge 1$. Therefore, $U_i(t; \x) = (R^{-1}X)_i(t; \x) > 0$ for all $0 \le t \le T$. This proves \eqref{zero1}. 

Note that $U(\cdot; \x)$ is a Brownian motion with drift in $\mathbb{R}^d$ with covariance matrix $R^{-1} \Sigma (R^{-1})^T$ and drift vector $-b$. Write $\hat{\sigma}_i^2 := (R^{-1}\Sigma (R^{-1})^T)_{ii}$ for the variance of the $i$-th coordinate process $U_i$ of $U$. Define for each $1 \le i \le d$,
$
\tau^U_i(\x) = \inf\{t \ge 0: U_i(t;\x) =0\}.
$
Also define the vector $\hat w$ given by $\hat w_i = \sigma_i^2 v_i^{-1}$ for $1 \le i \le d$. 
%Observe that for any $C'>0$ and any $\y$ with $\|\y\|_{\infty,v} \le C' M(v)$, $R^{-1}\y \le C'(R^{-1}w) M(v)$.

For any $i$, recalling $R^{-1}v \le \bb$, note that
$$
(R^{-1}\hat w)_i = \sum_{j=1}^d(R^{-1})_{ij}\frac{\sigma_j^2}{v_j} \le \sup_{1 \le k \le d}\frac{\sigma_k^2}{v_k^2}\sum_{j=1}^d(R^{-1})_{ij}v_j \le \left(\sup_{1 \le k \le d}\frac{\sigma_k^2}{v_k^2}\right) b_i.
$$
Moreover, using $\Sigma_{jk} \le \sigma_j \sigma_k$ for all $1 \le j, k \le d$,
$$
\hat{\sigma}_i^2 = \sum_{j,k}(R^{-1})_{ij}\Sigma_{jk} (R^{-1})_{ik} \le \left(\sum_{j=1}^d(R^{-1})_{ij} \sigma_j\right)^2 \le \sup_{1 \le k \le d}\frac{\sigma_k^2}{v_k^2}\left(\sum_{j=1}^d(R^{-1})_{ij} v_j\right)^2 \le \left(\sup_{1 \le k \le d}\frac{\sigma_k^2}{v_k^2}\right) b_i^2.
$$
From the above two bounds, we conclude from the definition of $T(v)$ that for any $i$,
\begin{equation}\label{tlow}
T(v) \ge \left(\frac{(R^{-1}\hat w)_i}{b_i} \vee \frac{\hat{\sigma}_i^2}{b_i^2}\right) M(v).
\end{equation}
Fix $C'>0$ and take $\y \in \mathbb{R}^d_+$ satisfying $R^{-1}\y \le C'(R^{-1}\hat w) M(v)$. Using \eqref{tlow} and writing $N(0,1)$ for a standard normal random variable, we obtain that for any $C'' > \max\{2C', 1\}$,
\begin{multline}\label{zero2}
\mathbb{P}\left(\tau^U_i(\y) > C'' T(v)\right) \le \mathbb{P}\left((R^{-1}\y)_i + (R^{-1}\D B(C'' T(v)) + R^{-1}\mu (C'' T(v)))_i> 0\right)\\
\le \mathbb{P}\left(C'(R^{-1}\hat w)_i M(v) + \hat{\sigma}_iB_i(C'' T(v)) - b_i C'' T(v) > 0\right)\\
= \mathbb{P}\left(\hat{\sigma}_iB_i(C'' T(v))  > (b_i C'' T(v) - C'(R^{-1}\hat w)_iM(v))\right)\\
\le \mathbb{P}\left(\hat{\sigma}_iB_i(C'' T(v))  > \frac{b_iC''}{2}T(v) \right) = \mathbb{P}\left(N(0,1) > \frac{b_i}{2\hat{\sigma}_i}\sqrt{C''T(v)}\right)\\
\le \mathbb{P}\left(N(0,1) > \frac{1}{2}\sqrt{C''\log \phi(v)}\right) \le  \mathbb{P}\left(N(0,1) > \frac{1}{2}\sqrt{C''\log (2d)}\right) \le \frac{1}{(2d)^{C''/8}}
% \le \mathbb{P}\left((R^{-1}\D B)_i(B'' \log d)> \delta B'' \log d - \beta_1B' \log d\right) \le \mathbb{P}\left((R^{-1}\D B)_i(B'' \log d)> \delta B'' \log d/2\right)\\
% \le \mathbb{P}\left(N(0,1) > \frac{\delta B'' \log d}{2\sqrt{\beta_1^2bB'' \log d}}\right) \le d^{-\delta^2B''/8\beta_1^2b},
\end{multline}
where on the last line we have used  \eqref{tlow} in the first inequality and  $\phi(v) \ge 2d$ in the second inequality. 

Recall the upper bounding process $X^+$ from \eqref{Xplusdef}. Note that the $i$-th coordinate process $X_i^+$ is a one dimensional reflected Brownian motion with variance $\sigma_i^2$ and drift $-v_i$. 
Now let  $C_0>0$ be arbitrary and consider any $C' > \max\{2 C_0, 1\}$, any $C''> \max\{C',2\}$. Then, from Lemma \ref{RBMdrift}, for   any $\x \in \mathbb{R}^d_+$ satisfying $\|\x\|_{\infty,v} \le C_0M(v)$, 
\begin{equation}\label{bound1minus}
	\begin{aligned}
&\mathbb{P}\left(\sup_{t \in [0, C'' T(v)]} \|X^+(t;\x)\|_{\infty,v} > C' M(v)\right)\\
& \le \sum_{i=1}^d\mathbb{P}\left(\sup_{t \in [0, C'' T(v)]} X^+_i(t;\x) > C' \frac{\sigma_i^2}{v_i}(\Lambda(v) + \log \phi(v))\right)\\
% \sum_{i=1}^d\mathbb{P}\left(\sup_{t \in [0, C'' T(v)]} (\sigma_iB_i(t) - v_i t) > \frac{C'}{2} \frac{\sigma_i^2}{v_i}(\Lambda(v) + \log \phi(v))\right)\\
& \le \sum_{i=1}^d\left(e^{-\frac{C''v_i^2T(v)}{2\sigma_i^2}} + \left(\frac{4C''v_i^2T(v)}{C'\sigma_i^2(\Lambda(v) + \log \phi(v))} + 2\right)e^{-C'(\Lambda(v) + \log \phi(v))}\right)\\
&\le de^{-2^{-1}C''\log \phi(v)} + \left(\frac{4C''T(v)}{C'M(v)}\sum_{i=1}^d\frac{v_i^2}{\sigma_i^2} + 2d\right)e^{-C'\log \phi(v)}.
\end{aligned}
\end{equation}
where we have used $T(v) \ge (\sigma_i^2/v_i^2) M(v) \ge (\sigma_i^2/v_i^2)\log \phi(v)$ in the last step. From the definition of $\phi(v)$ and $T(v)$ respectively,
$$
\sum_{i=1}^d\frac{v_i^2}{\sigma_i^2}  = \frac{\phi(v)}{2} \Lambda(v), \ \text{ and } \frac{T(v)}{M(v)} = 1/\Lambda(v).
$$
Using these observations in \eqref{bound1minus}, we obtain
\begin{multline}\label{bound1}
\mathbb{P}\left(\sup_{t \in [0, C'' T(v)]} \|X^+(t;\x)\|_{\infty,v} > C' M(v)\right) \le de^{-C''\log \phi(v)/2} + \left(\frac{2C''\phi(v)}{C'} + 2d\right)e^{-C'\log \phi(v)}\\
\le \frac{1}{(2d)^{\frac{C''}{2} - 1}} + \left(\frac{2C''}{C'} + 1\right)\frac{1}{(2d)^{C'-1}}
\end{multline}
where once more we have used $\phi(v) \ge 2d$ to obtain the last bound.

Note that for any $C', C''>0$, for any $\x_1, \x_2 \in \mathbb{R}^d_+$ satisfying $\|\x_1\|_{\infty,v} \le C_0 M(v)$ and  $R^{-1}\x_2 \le R^{-1}\x_1$,
\begin{multline}\label{brk1}
\mathbb{P}\left(\sup_{t \in [0, C'' T(v)]} \|X^+(t;\x_1)\|_{\infty,v} > C' M(v) \ \text{ or } \ \eta^{1}(\x_2) > C'' T(v)\right)\\
= \mathbb{P}\left(\sup_{t \in [0, C'' T(v)]} \|X^+(t;\x_1)\|_{\infty,v} > C' M(v)\right)\\
+ \mathbb{P}\left(\eta^{1}(\x_2) > C'' T(v), \  \sup_{t \in [0, C'' T(v)]} \|X^+(t;\x_1)\|_{\infty,v} \le C' M(v)\right).
\end{multline}
%From \eqref{bound1},
%\begin{equation}\label{brk2}
%\mathbb{P}\left(\sup_{t \in [0,nB'' \log d]} \|X(t;\x)\|_{\infty} > B' \log d\right) \le d^{1-\frac{B'^2\beta_1^{-2}}{32b^2nB''}}.
%\end{equation}
Note that for $\z_1, \z_2 \in \mathbb{R}_+^d$, if $\|\z_1\|_{\infty,v} \le C M(v)$ for some $C >0$, and  $R^{-1}\z_2 \le R^{-1}\z_1$,  then $R^{-1}\z_2 \le C (R^{-1}\hat w) M(v)$. Using \eqref{zero1} and \eqref{zero2}, choosing any  $C'' > 1 + \max\{2C', 1\}$, we obtain by the Markov property applied at time $1$,
%, and the fact that $\eta^{1}(\x_1)>\eta^{1}(\x_2)$,\footnote{some changes here-please check}
\begin{multline}\label{brk3}
\mathbb{P}\left(\eta^{1}(\x_2) > C'' T(v), \  \sup_{t \in [0, C'' T(v)]} \|X^+(t;\x_1)\|_{\infty,v} \le C' M(v)\right)\\
 \le \sum_{i = 1}^d\sup_{\y \in \mathbb{R}^d_+: R^{-1}\y \le C'(R^{-1}w) M(v)}\mathbb{P}\left(\tau^U_i(\y) > (C''-1) T(v)\right)
\le \frac{d}{(2d)^{(C''-1)/8}} = \frac{1}{2(2d)^{(C''-9)/8}}.
\end{multline}
Using the estimates \eqref{bound1} and \eqref{brk3} in \eqref{brk1}, we obtain for $C' > \max\{2 C_0, 8\}$,  $C''  = 2 + \max\{2C',33\}$, and any $\x_1, \x_2 \in \mathbb{R}^d_+$ satisfying $\|\x_1\|_{\infty,v} \le C_0 M(v)$ and  $R^{-1}\x_2 \le R^{-1}\x_1$,
\begin{multline*}
\mathbb{P}\left(\sup_{t \in [0, C'' T(v)]} \|X^+(t;\x_1)\|_{\infty,v} > C' M(v) \ \text{ or } \ \eta^{1}(\x_2) > C'' T(v)\right)\\
\le \frac{1}{(2d)^{\frac{C''}{2} - 1}} + \left(\frac{2C''}{C'} + 1\right)\frac{1}{(2d)^{C'-1}} + \frac{1}{2(2d)^{(C''-9)/8}} < \frac{1}{2}.
\end{multline*}
The lemma follows on taking $C_1 = C', C_2 = C''$.
%Using \eqref{zero3} and computations similar to \eqref{bound1}, we obtain for $B_1> \max\{4 B_0/\beta_1, 32b^2B_2\beta_1^2\}$, $B_2 > 1 + \max\{\frac{2B_1 \beta_1}{\delta}, \frac{8\beta_1^2b}{\delta^2}\}$ and $d_0'$ large enough,
%\begin{multline}\label{oneitn}
%\mathbb{P}\left(\sup_{t \in [0, B_2 \log d]} \|X(t;\x)\|_{\infty} \le B_1 \log d, \ \ \eta^{1}(\x) \le B_2 \log d\right)\\
% \ge 1 - \mathbb{P}\left(\eta^1(\x) > B_2 \log d\right) - \mathbb{P}\left(\sup_{t \in [0, B_2 \log d]} \|X(t;\x)\|_{\infty} > B_1 \log d\right)\\
% \ge 1 - \mathbb{P}\left(\eta^1(\x) > B_2 \log d\right) - 2\sum_{i=1}^d\mathbb{P}\left(\sup_{t \in [0,B_2 \log d]} \sigma_i W_i(t) > B_1\beta_1^{-1} \log d/4\right)\\
% \ge 1 - \mathbb{P}\left(\eta^1(\x) > B_2 \log d\right) - 4d\mathbb{P}\left(N(0,1) > B_1\beta_1^{-1} \log d/(4b\sqrt{B_2 \log d})\right)\\
% \ge 1 - de^{-\frac{B_1^2\beta_1^{-2}}{32b^2}(\log d)^2} - d^{1- \frac{\delta^2(B_2-1)}{8\beta_1^2b}} - d^{1-\frac{B_1^2\beta_1^{-2}}{32b^2B_2}}
%\end{multline}
%for all $d \ge d_0'$.
\end{proof}
\begin{rem}\label{ss}
Recall the quantity  $A_0$ from Lemma \ref{lyaphit} and consider $C_0\ge A_0$. Let  $C_1$ be as in Lemma \ref{zerohit} associated with this $C_0$. Then the set 
$S := \{\y: \|\y\|_{\infty , v} \le C_1M(v)\}$ plays a role similar to that of a `small set' in the theory developed in \cite{meyn2012markov}, in the following sense. For any $\x \ge 0$, (i) by Lemma \ref{lyaphit}, we have tight control over return times of the bounding process $X^+(\cdot;\x)$ to the set $S'=\{\y: \|\y\|_{\infty, v} \le C_0M(v)\} \subset S$, and (ii) by Lemma \ref{zerohit}, given that the bounding process $X^+(t;\x)$ lies in $S'$ for some $t \ge 0$, then with probability at least a half, all the co-ordinates of $X(\cdot;\x)$ hit zero at least once in the time interval $[t, t + T(v)]$ without $X^+(\cdot;\x)$ leaving $S$. This, in view of  Lemma \ref{contract}, says that  $\|X(\cdot;\x) - X(\cdot;\0)\|_1$ is reduced by a factor $2^{-1/n(R)}$ over this time interval.
\end{rem}
\section{Excursions from the small set}\label{excth}
In the following lemma, we combine the estimates from Sections \ref{con} and \ref{sm} to decompose the path of $X^+(\cdot;\x)$ into excursions from the small set (described in Remark \ref{ss}) and quantify the rate of decay of $\|X(t;\x) - X(t;\0)\|_1$ as $t$ increases. 
%Lemma \ref{lyaphit} provides control on return times of the process $X^+(\cdot;\x)$ to the compact set given by $\{\y: \|\y\|_{\infty}^{v} \le A \log \phi(v)\}$ for appropriately chosen $A>0$. If we appropriately enlarge this compact set to $\{\y: \|\y\|_{\infty}^{v} \le C_1M(v)\}$ for some positive constant $C_1>0$, then Lemma \ref{zerohit} establishes that with probability at least a half, all the coordinates of the process $X(\cdot;\x)$ hit zero without the process itself leaving this set. This set is our choice of the `small set' in the spirit of \cite{meyn2012markov}. Lemma \ref{contract} shows that if all the coordinates of $X(\cdot;\x)$ hit zero at least $n(R)$ times, then $\|X(\cdot;\x) - X(\cdot;\0)\|_1$ contracts by a factor of half. The following lemma combines all these observations to estimate the rate of exponential decay of $\|X(t;\x) - X(t;\0)\|_1$ as $t$ increases.
\begin{lemma}\label{wassest}
For any $A \ge A_0$, where $A_0$ is the constant appearing in Lemma \ref{lyaphit}, there exist positive constants $t_0, D_1$ such that for any $\x \in \mathbb{R}^d_+$,
any $v>0$ with $R^{-1}v \le \bb$, and any
$t \ge t_0 T(v)$,
\begin{equation*}
\mathbb{E}(\|X(t;\x) - X(t; \0)\|_1) \le 2\|\x\|_1\left(2e^{-\frac{D_1t}{n(R)T(v)}} + e^{-\frac{\Lambda(v)t}{16A}}\right) + 2\|\x\|_1e^{3A^{-1}\|\x\|_{\infty,v}}e^{-\frac{\Lambda(v)t}{8A}}.
\end{equation*} 
%Moreover, there are positive constants $D, D'$ such that for $d \ge d_0''$, $t \ge t_0 \log d$ and $\x \in \mathbb{R}^d_+$,
%\begin{equation*}
%\mathbb{E}(\|X(t;\x) - X(t; \mathbf{X}(\infty))\|_1) \le D_1(\|\x\|_1 + Dd)de^{-D_2t/\log d} + \left(\|\x\|_1e^{C\|\x\|_{\infty}} + D'd^{3/2}\right)de^{-D_4t}.
%\end{equation*}
%In particular, the relaxation time of RBM is $O((\log d)^2)$.
\end{lemma}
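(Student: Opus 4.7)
My plan is to use Lemma \ref{contract} to reduce the problem to producing a lower bound on the counter $\mathcal{N}(t;\x)$: the ``clock'' will be the sequence of visits of the bounding process $X^+(\cdot;\x)$ to the set $K_A$, and within each inter-visit block Lemma \ref{zerohit} will produce, with probability at least $1/2$, an ``all coordinates of $X(\cdot;\x)$ hit zero'' event that increments $\mathcal{N}$. Averaging $2\|\x\|_1 \cdot 2^{-\mathcal{N}(t;\x)/n(R)}$ over the good and bad scenarios will produce the three terms in the claimed bound.

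First I would handle the initial waiting time $\tau_0 := \tau_A^+(\x)$. By Markov's inequality and Lemma \ref{lyaphit},
\[
\mathbb{P}(\tau_0 > t/4) \le e^{3A^{-1}\|\x\|_{\infty,v}} e^{-\Lambda(v)t/(8A)};
\]
on this bad event the trivial bound $\|X(t;\x) - X(t;\0)\|_1 \le 2\|\x\|_1$ coming from Lemma \ref{contract} already accounts for the third term of the claim. On $\{\tau_0 \le t/4\}$, define cycles inductively by $\zeta_k = \tau_{k-1} + C_2 T(v)$ and $\tau_k = \inf\{u \ge \zeta_k : X^+(u;\x) \in K_A\}$, where $C_1, C_2$ are produced by Lemma \ref{zerohit} with $C_0 = A$. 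Let $E_k$ be the event that during $[\tau_{k-1}, \zeta_k]$ every coordinate of $X(\cdot;\x)$ hits zero and $\|X^+(\cdot;\x)\|_{\infty,v}$ stays $\le C_1 M(v)$. By the strong Markov property at $\tau_{k-1}$, together with the domination $R^{-1}X(\tau_{k-1};\x) \le R^{-1}X^+(\tau_{k-1};\x)$ and $\|X^+(\tau_{k-1};\x)\|_{\infty,v} \le A\log\phi(v) \le A M(v)$, Lemma \ref{zerohit} gives $\mathbb{P}(E_k \mid \mathcal{F}_{\tau_{k-1}}) \ge 1/2$. Applying Lemma \ref{lyaphit} at $\zeta_k$ on $E_k$ yields a uniform bound $\mathbb{E}\bigl(e^{\Lambda(v)(\tau_k-\zeta_k)/(2A)}\mid\mathcal{F}_{\zeta_k}\bigr) \le e^{3C_1 M(v)/A}$ on the next return time to $K_A$.

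Two Chernoff estimates then finish the job. The first, a geometric-type Chernoff bound based on the uniform exponential moment for $\tau_k - \zeta_k$ and the deterministic part $\zeta_k - \tau_{k-1} = C_2 T(v)$, shows that for a suitable universal $c_1 > 0$ the cycle count $K(t) := \#\{k : \tau_k \le t\}$ satisfies $\mathbb{P}(K(t) < c_1 t/T(v) \mid \tau_0 \le t/4) \le e^{-\Lambda(v)t/(16A)}$; the appearance of $T(v)$ rather than $1/\Lambda(v)$ in the rate is forced by the need to absorb the factor $e^{3C_1 M(v)/A}$ into the exponent. This accounts for the second term of the claim. Second, on $\{K(t) \ge N\}$ with $N = c_1 t/T(v)$, the indicators $\{\one_{E_k}\}_{k=1}^N$ are conditionally stochastically dominated from below by i.i.d.\ $\text{Bernoulli}(1/2)$ variables, so a Chernoff bound gives $\mathbb{P}(\sum_{k=1}^N \one_{E_k} < N/4) \le e^{-c_2 N}$ for a universal $c_2 > 0$. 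Enlarging $C_2$ if necessary so that $C_2 T(v) \ge 1$, which may be assumed without loss because the hypothesis already demands $t \ge t_0 T(v)$, each $E_k$ contributes a distinct $\eta^j$-event, so $\mathcal{N}(t;\x) \ge N/4$ on this event.

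Combining via Lemma \ref{contract},
\[
\mathbb{E}\|X(t;\x) - X(t;\0)\|_1 \le 2\|\x\|_1\,\mathbb{E}\,2^{-\mathcal{N}(t;\x)/n(R)} \le 2\|\x\|_1\bigl(2 \cdot 2^{-N/(4n(R))} + \mathbb{P}(\text{bad events})\bigr),
\]
and setting $D_1 := c_1 (\log 2)/4$ produces the leading term $4\|\x\|_1 e^{-D_1 t/(n(R) T(v))}$, while the three tail probabilities above match the three exponentials in the claim. The main obstacle will be the bookkeeping: (a) ensuring the $E_k$'s genuinely stochastically dominate i.i.d.\ Bernoullis via iterated use of the strong Markov property, where the ``$X^+$ stays in the enlarged set'' clause of Lemma \ref{zerohit} is essential because it controls $\|X^+(\zeta_k;\x)\|_{\infty,v}$ and hence the conditional exponential moment of the next return time; (b) aligning successful cycles with the formal $\eta^k$ definition, which requires gaps of $\ge 1$ in time; and (c) calibrating $c_1$ and $c_2$ so that the three exponents $D_1/(n(R)T(v))$, $\Lambda(v)/(16A)$ and $\Lambda(v)/(8A)$ land precisely as stated.
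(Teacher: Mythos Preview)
Your strategy is essentially the paper's: reduce via Lemma \ref{contract} to a lower bound on $\mathcal{N}(t;\x)$, build cycles from returns of $X^+$ to a small set, use Lemma \ref{zerohit} for a Bernoulli-$1/2$ increment per cycle, use Lemma \ref{lyaphit} for return-time control, and split the failure probability into the three pieces matching the three exponentials. But there is a real gap in your cycle-count step. You claim a ``uniform'' bound $\mathbb{E}\bigl(e^{\Lambda(v)(\tau_k-\zeta_k)/(2A)}\mid\mathcal{F}_{\zeta_k}\bigr) \le e^{3C_1 M(v)/A}$ and feed it into a Chernoff argument for $\mathbb{P}(K(t) < c_1 t/T(v))$. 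However, the input $\|X^+(\zeta_k;\x)\|_{\infty,v} \le C_1 M(v)$ that Lemma \ref{lyaphit} needs is only available \emph{on $E_k$}: on $E_k^c$ the bounding process may have escaped the enlarged set by the deterministic time $\zeta_k = \tau_{k-1} + C_2 T(v)$, and then $\tau_k - \zeta_k$ has no controlled exponential moment. Since the Chernoff bound for $K(t)$ must sum over \emph{all} cycles, good and bad, the argument does not close as written; the clause you flag in (a) as ``essential'' does not help precisely on the cycles where it fails.

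The paper repairs this with a small but crucial change in the cycle structure. Rather than a purely deterministic window, the end of the cycle is taken to be
\[
\tau_{2k+2} := \inf\{t \ge \tau_{2k+1} : \|X^+(t;\x)\|_{\infty,v} = C_1 M(v)\} \wedge (\tau_{2k+1} + C_2 T(v)),
\]
where $\tau_{2k+1}$ is the first time after $\tau_{2k}$ that $\|X^+\|_{\infty,v} \le C_0 M(v)$. Because the window is entered from the inner level $C_0 M(v) < C_1 M(v)$, this definition forces $\|X^+(\tau_{2k+2};\x)\|_{\infty,v} \le C_1 M(v)$ at \emph{every} cycle endpoint, regardless of whether the good event occurred; the return-time exponential moment is then uniformly at most $e^{3C_1 M(v)/A}$ and the iterated Markov bound (the paper's \eqref{was3x}--\eqref{was3}) goes through unconditionally. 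The ``$X^+$ stays in the enlarged set'' clause of Lemma \ref{zerohit} then plays a different role from the one you assign it: it guarantees that on the good event the cycle is not cut short at the exit time, so the full window of length $C_2 T(v)$ is available for the coordinate-hitting event.
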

\begin{proof}
Fix $A\ge A_0$ and consider constants $C_1, C_2$ from Lemma \ref{zerohit} that are associated with $C_0=A$.
We also consider the following stopping times. Let $\tau_0 = \inf\{t \ge 0: \|X^+(t;\x)\|_{\infty,v} \le C_1 M(v)\}$. For $k \ge 0$, having defined the stopping times $\tau_0, \dots, \tau_{2k}$, define
\begin{align*}
\tau_{2k+1} &:= \inf\{t \ge \tau_{2k}: \|X^+(t;\x)\|_{\infty,v} \le C_0 M(v)\},\\
\tau_{2k+2} &:= \inf\{t \ge \tau_{2k+1}: \|X^+(t;\x)\|_{\infty,v} = C_1M(v)\} \wedge (\tau_{2k+1} + C_2T(v)).
\end{align*}
Define
$
\mathbf{N}_t := \inf\{k \ge 0: \tau_{2k} \le t\}.
$
For $k \ge 0$, define the event
\begin{multline*}
E_k :=\{\tau_{2k+2} \ge \tau_{2k+1} +1, \ \text{ and all the co-ordinates of } \{X(t;\x)\}_{t \ge 0} \text{ hit zero }\\
\text{ in the time interval } [\tau_{2k+1} +1, \tau_{2k+2}]\}.
\end{multline*}
%By Theorem 6 (i) of \cite{KW1996}, almost surely, for any $k \ge 0$ and any $t \in [\tau_{2k}, \tau_{2k+2}]$, $X(t;\x) \le X_k(t - \tau_{2k})$ and $X^+(t;\x) \le X^+_k(t - \tau_{2k})$ coordinate-wise. 
On the event $E_k$, all the coordinates of $X(\cdot;\x)$ hit zero in the time interval $[\eta^{\mathcal{N}(\tau_{2k}; \x)} + 1, \tau_{2k+2}]$ as it contains the interval $[\tau_{2k+1} + 1, \tau_{2k+2}]$. Consequently, $\mathcal{N}(\tau_{2k+2}; \x) - \mathcal{N}(\tau_{2k}; \x) \ge 1$. Thus, for any $k \ge 0$,
\begin{equation*}
\mathcal{N}(\tau_{2k+2}; \x) - \mathcal{N}(\tau_{k}; \x) \ge \one_{E_k}.
\end{equation*}
Hence,
\begin{equation}\label{was1}
\mathcal{N}(t; \x) \ge \sum_{k=0}^{\mathbf{N}_t - 1} \one_{E_k}.
\end{equation}
Let $\mathcal{F}_t := \sigma\{B(s): 0 \le s \le t\}$ be the filtration generated by the Brownian motion.
For $k \ge 0$, let 
$$
M_n := \sum_{k=0}^{n-1}\left(\one_{E_k} - \mathbb{E}\left(\one_{E_k} \mid \mathcal{F}_{\tau_{2k}}\right)\right).
$$
Then $(M_n,\mathcal{F}_{\tau_{2n}})_{n \ge 1}$ is a martingale with increments bounded by $1$.
By Lemma \ref{zerohit}, for every $k \ge 0$,
$
\mathbb{E}\left(\one_{E_k} \mid \mathcal{F}_{\tau_{2k}}\right) \ge 1/2.
$
Thus, for any $\delta' >0$, using the Azuma-Hoeffding inequality with $t \ge 4T(v)/\delta'$,
\begin{multline}\label{was1.5}
\mathbb{P}\left(\sum_{k=0}^{\mathbf{N}_t - 1} \one_{E_k} < \delta' t/(4 T(v)), \  \mathbf{N}_t \ge \frac{\delta' t}{T(v)}\right) \le \mathbb{P}\left(\sum_{k=0}^{\lfloor\frac{\delta' t}{T(v)}\rfloor - 1} \one_{E_k} < \delta' t/(4 T(v))\right)\\
=  \mathbb{P}\left(M_{\lfloor\frac{\delta' t}{T(v)}\rfloor} < \frac{\delta' t}{4 T(v)} -  \sum_{k=0}^{\lfloor\frac{\delta' t}{T(v)}\rfloor - 1}\mathbb{E}\left(\one_{E_k} \mid \mathcal{F}_{\tau_{2k}}\right)\right) \le \mathbb{P}\left(M_{\lfloor\frac{\delta' t}{T(v)}\rfloor} < \frac{\delta' t}{4 T(v)} -  \frac{1}{2}\lfloor\frac{\delta' t}{T(v)}\rfloor\right)\\
\le \mathbb{P}\left(M_{\lfloor\frac{\delta' t}{T(v)}\rfloor} < -\frac{\delta' t}{8 T(v)}\right) \le e^{-\delta' t/(128 T(v))}.
\end{multline}
Note that for any $\delta' \in (0, C_2^{-1}/2]$,
\begin{multline}\label{was2}
\mathbb{P}\left(\mathbf{N}_t < \delta' t/T(v)\right) \le \mathbb{P}\left(\tau_0 + \sum_{k=0}^{ \lfloor\frac{\delta' t}{T(v)}\rfloor} (\tau_{2k+2} - \tau_{2k+1}) + \sum_{k=0}^{ \lfloor\frac{\delta' t}{T(v)}\rfloor} (\tau_{2k+1} - \tau_{2k}) > t\right)\\
\le \mathbb{P}\left(\tau_0 + \sum_{k=0}^{ \lfloor\frac{\delta' t}{T(v)}\rfloor} (\tau_{2k+1} - \tau_{2k}) > t/2\right),
\end{multline}
where the last inequality follows because $\tau_{2k+2} - \tau_{2k+1} \le C_2 T(v)$ and hence, as $\delta' \in (0, C_2^{-1}/2]$,
\begin{equation}\label{ext}
\sum_{k=0}^{ \lfloor\frac{\delta' t}{T(v)}\rfloor} (\tau_{2k+2} - \tau_{2k+1}) \le t/2.
\end{equation}
Since $C_0=A$ and $M(v) > \log \phi(v)$, for any $k \ge 0$, conditionally on $\mathcal{F}_{\tau_{2k}}$, $\tau_{2k+1} - \tau_{2k}$ is stochastically dominated by $\tau_{A}^+(X^+(\tau_{2k};\x))$ where $\tau_{A}^+(\cdot)$ is defined in \eqref{tauplusde}. 

For any $n \ge 0$ $s>0$, using Lemma \ref{lyaphit}, we obtain
\begin{align*}
\mathbb{P}\left(\sum_{k=0}^{n} (\tau_{2k+1} - \tau_{2k}) > s \right) &\le e^{-\frac{\Lambda(v) s}{2A}}\mathbb{E}\left(e^{\frac{\Lambda(v)}{2A}\left(\sum_{k=0}^{n} (\tau_{2k+1} - \tau_{2k})\right)}\right)\\
 &= e^{-\frac{\Lambda(v) s}{2A}}\mathbb{E}\left(e^{\frac{\Lambda(v)}{2A}\left(\sum_{k=0}^{n-1} (\tau_{2k+1} - \tau_{2k})\right)}\mathbb{E}\left(e^{\frac{\Lambda(v)}{2A} (\tau_{2n+1} - \tau_{2n})}\ \Big| \ \mathcal{F}_{\tau_{2n}}\right)\right)\\
 &\le e^{-\frac{\Lambda(v) s}{2A}}\mathbb{E}\left(e^{\frac{\Lambda(v)}{2A}\left(\sum_{k=0}^{n-1} (\tau_{2k+1} - \tau_{2k})\right)}e^{3A^{-1}\|X^+(\tau_{2n};\x)\|_{\infty,v}}\right)\\
 &\le e^{-\frac{\Lambda(v) s}{2A}}e^{3A^{-1}C_1M(v)}\mathbb{E}\left(e^{\frac{\Lambda(v)}{2A}\left(\sum_{k=0}^{n-1} (\tau_{2k+1} - \tau_{2k})\right)}\right)
\end{align*}
where we have used $\|X^+(\tau_{2n};\x)\|_{\infty,v} \le C_1M(v)$ by definition of $\tau_{2n}$, and we take $\sum_{k=0}^{n-1} (\tau_{2k+1} - \tau_{2k}) = 0$ when $n=0$. Iteratively using the same argument, we obtain
\begin{equation}\label{was3x}
\mathbb{P}\left(\sum_{k=0}^{n} (\tau_{2k+1} - \tau_{2k}) > s \right) \le e^{-\frac{\Lambda(v) s}{2A}}e^{3(n+1)A^{-1}C_1M(v)}.
\end{equation}
From \eqref{was3x}, for any positive $\delta' \le \min\{C_2^{-1}/2, (64C_1)^{-1}\}$ and  $t \ge 3T(v)/\delta'$, taking $n= \lfloor\frac{\delta' t}{T(v)}\rfloor$ and $s=t/4$,
\begin{multline}\label{was3}
\mathbb{P}\left(\sum_{k=0}^{\lfloor\frac{\delta' t}{T(v)}\rfloor} (\tau_{2k+1} - \tau_{2k}) > t/4\right) \le e^{-\frac{\Lambda(v) t}{8A}}e^{3\delta'tA^{-1}C_1M(v)/T(v)}e^{3M(v)A^{-1}C_1}\\
\le e^{-\frac{\Lambda(v) t}{8A}}e^{4\delta'tA^{-1}C_1M(v)/T(v)}
= e^{-\frac{\Lambda(v) t}{8A}}e^{4\delta'tA^{-1}C_1\Lambda(v)} \le e^{-\frac{\Lambda(v) t}{16A}}
\end{multline}
where we have used $T(v) = M(v)/\Lambda(v)$ in the equality above. Moreover, as $\tau_0 \le \tau^+_A(\x)$, by Lemma \ref{lyaphit},
\begin{equation}\label{was4}
\mathbb{P}\left(\tau_0 > t/4\right) \le e^{-\frac{\Lambda(v)t}{8A}}\mathbb{E}\left(e^{\frac{\Lambda(v)}{2A} \tau^+_A(\x)}\right) \le e^{-\frac{\Lambda(v)t}{8A}}e^{3A^{-1}\|\x\|_{\infty,v}}.
\end{equation}
Using \eqref{ext}, \eqref{was3} and \eqref{was4} in \eqref{was2}, for any $\delta'$ and $t$ as above,
\begin{multline}\label{was5}
\mathbb{P}\left(\mathbf{N}_t < \delta' t/T(v)\right) \le \mathbb{P}\left(\sum_{k=0}^{\frac{1}{2}\lfloor \delta' t/T(v)\rfloor} (\tau_{2k+1} - \tau_{2k}) > t/4\right) + \mathbb{P}\left(\tau_0 > t/4\right)\\
\le e^{-\frac{\Lambda(v)t}{16A}} + e^{-\frac{\Lambda(v)t}{8A}}e^{3A^{-1}\|\x\|_{\infty,v}}.
\end{multline}
From \eqref{was1.5} and \eqref{was5}, for positive $\delta' \le \min\{C_2^{-1}/2, (64C_1)^{-1}\}$ and $t \ge 4 T(v)/\delta'$,
\begin{multline}\label{was5.5}
\mathbb{P}\left(\sum_{k=0}^{\mathbf{N}_t - 1} \one_{E_k} < \delta' t/(4T(v))\right)\\
 \le \mathbb{P}\left(\sum_{k=0}^{\mathbf{N}_t - 1} \one_{E_k} < \delta' t/(4 T(v)), \  \mathbf{N}_t \ge \frac{\delta' t}{T(v)}\right) + \mathbb{P}\left(\mathbf{N}_t < \delta' t/T(v)\right)\\
 \le e^{-\delta' t/(128 T(v))} + e^{-\frac{\Lambda(v)t}{16A}} + e^{-\frac{\Lambda(v)t}{8A}}e^{3A^{-1}\|\x\|_{\infty,v}}.
\end{multline}
%are sub-exponential random variables, and then using the Chernoff's inequality (see \cite[Pg.~16, Equation (2.2)]{massart2007}) to the sum $\sum_{i=1}^{\lfloor b\varepsilon^{-2}t \rfloor}\varepsilon^{-2}\left(\zeta_i^W - \expt(\zeta_i^W)\right)$.
Now, using Lemma \ref{contract}, \eqref{was1} and \eqref{was5.5}, for positive $\delta' \le \min\{C_2^{-1}/2, (64C_1)^{-1}\}$ and $t \ge 4 T(v)/\delta'$,
\begin{multline}\label{concal}
\mathbb{E}(\|X(t;\x) - X(t; \0)\|_1) \le 2\|\x\|_1 \mathbb{E}\left(2^{-\mathcal{N}(t;\x)/n(R)}\right) \le 2\|\x\|_1 \mathbb{E}\left(2^{-\frac{1}{n(R)}\sum_{k=0}^{\mathbf{N}_t - 1} \one_{E_k}}\right)\\
\le 2\|\x\|_1 \mathbb{P}\left(\sum_{k=0}^{\mathbf{N}_t - 1} \one_{E_k} < \delta' t/(4T(v))\right) + 2\|\x\|_{1}2^{-\frac{\delta't}{4n(R)T(v)}}\\
\le 2\|\x\|_1\left(e^{-\delta' t/(128 T(v))} + e^{-\frac{\Lambda(v)t}{16A}} + e^{-\frac{\Lambda(v)t}{8A}}e^{3A^{-1}\|\x\|_{\infty,v}} + 2^{-\frac{\delta't}{4n(R)T(v)}} \right)\\
\le 2\|\x\|_1\left(2e^{-\frac{\delta't}{128n(R)T(v)}} + e^{-\frac{\Lambda(v)t}{16A}}\right) + 2\|\x\|_1e^{3A^{-1}\|\x\|_{\infty,v}}e^{-\frac{\Lambda(v)t}{8A}}.
%\le 6\|\x\|_1e^{-\frac{\delta'(\log 2)t}{64n(R)T(v)}} + 2\|\x\|_1e^{2A^{-1}\|\x\|_{\infty,v}}e^{-\frac{\Lambda(v)t}{32A}}
\end{multline}
%where the last line follows from the fact that $n(R)T(v)  \ge n(R) \log \phi(v)/\Lambda(v) \ge (\log 2)/\Lambda(v)$. 
This proves the lemma with
\begin{equation} \label{eq:rev1} t_0 = 4/\delta', \; D_1 = \delta'/128.
	\end{equation}
\end{proof}
\section{Main result: optimizing over $v$}\label{optimize}
In this section, we state and prove our main theorem. This will involve optimizing the bound obtained in Lemma \ref{wassest} over all possible choices of $v$ along with making an appropriate choice of $A$.

{\em Proof of Theorem \ref{wasthm}.}
Fix any $A \ge A_0$ whose value will be appropriately chosen later. Recall from Lemma \ref{wassest} that the quantities $\frac{D_1}{n(R)T(v)}$ and $\frac{\Lambda(v)}{A}$ govern the rate of decay of $\mathbb{E}(\|X(t;\x) - X(t; \0)\|_1)$ for any $\x \in \mathbb{R}^d_+$.
To obtain the result in the theorem, we first obtain a value $v(\Theta)$ of $v$ which simultaneously maximizes $\frac{D_1}{n(R)T(v)}$ and $\frac{\Lambda(v)}{A}$ over all vectors $v>0$ satisfying $R^{-1}v \le \bb$. For any such $v$, define the vector $s(v) >0$ by
$$
(s(v))_i = \sigma_i\inf_{1 \le k \le d}\frac{v_k}{\sigma_k}, \ \ \ 1 \le i \le d. 
$$
Then, from the definition of $\Lambda$ we see that $\Lambda(s(v)) = \Lambda(v)$. Moreover, $\phi(s(v)) = 2d \le \phi(v)$ from definition of $\phi$. Therefore,
$$
T(s(v)) = \frac{\Lambda(s(v)) + \log (2d)}{\Lambda(s(v))} \le \frac{\Lambda(v) + \log(\phi(v))}{\Lambda(v)} = T(v).
$$
Thus, for maximizing the rate, it suffices to restrict attention to vectors $v$ of the form $v_i = v_*\sigma_i$ for $1 \le i \le d$. From the constraint $R^{-1}v \le \bb$, we obtain
$$
b_i \ge (R^{-1}v)_i = v_*\sum_{j=1}^d(R^{-1})_{ij}\sigma_j, \mbox{ for all } i,
$$
and hence,
$$
v_* \le \inf_{1\le i \le d}\left[\frac{b_i}{\sum_{j=1}^d(R^{-1})_{ij}\sigma_j}\right] = \frac{1}{a(\Theta)}.
$$
From this observation, it follows that for any such vector $v$, $\Lambda(v) = v_*^2 \le (a(\Theta))^{-2}$ and $T(v) \ge 1 + (a(\Theta))^2\log(2d)$. The vector $\widetilde{v}$ given by $\widetilde{v}_i := (a(\Theta))^{-1} \sigma_i$ for each $i$ satisfies $R^{-1}\widetilde{v} \le \bb$ and hence, simultaneously maximizes $\frac{D_1}{n(R)T(v)}$ and $\frac{\Lambda(v)}{A}$. 
Also note that $T(\widetilde{v}) = 1 + (a(\Theta))^2\log(2d)$ and $\Lambda(\widetilde{v}) = (a(\Theta))^{-2}$.

From Lemma \ref{wassest} with $\widetilde{v}$ in place of $v$, we obtain for any $\x \in \mathbb{R}^d_+$ and $t \ge t_0 \left(1 + (a(\Theta))^2\log(2d)\right)$,
\begin{equation}\label{optx}
\mathbb{E}(\|X(t;\x) - X(t; \0)\|_1) \le 2\|\x\|_1\left(2e^{-\frac{D_1t}{R_1(\Theta,d)}} + e^{-\frac{t}{16A(a(\Theta))^2}}\right)
+ 2\|\x\|_1e^{3A^{-1}\|\x\|_{\infty,\widetilde{v}}}e^{-\frac{t}{8A(a(\Theta))^2}}.
\end{equation}
Consider the dominating process $X^+(\cdot;\x) = X_v^+(\cdot;\x)$ with $v = \widetilde{v}$. Since $R^{-1}X(t;\x) \le R^{-1}X^+(t;\x)$, for each $i$,
\begin{align*}
X_i(t;\x) &\le (R^{-1}X(t;\x))_i \le (R^{-1}X^+(t;\x))_i = \sum_{j=1}^d(R^{-1})_{ij}X^+_j(t;\x)\\
&\le \left(\sup_{1 \le k \le d} \frac{X^+_k(t;\x)}{\sigma_k}\right)\sum_{j=1}^d(R^{-1})_{ij}\sigma_j
\end{align*}
and hence,
$$
\|X(t;\x)\|_{\infty,\widetilde{v}} \le b(\Theta)\|X^+(t;\x)\|_{\infty,\widetilde{v}}, \ \ t \ge 0.
$$
Moreover,
$$
\|X(t;\x)\|_{1} \le \sum_{i,j}(R^{-1})_{ij}X_j(t;\x) \le \sum_{i,j}(R^{-1})_{ij}X^+_j(t;\x), \ \ t \ge 0.
$$
Denote by $X(\infty)$ and $X^+(\infty)$ the random vectors sampled from the stationary distribution of $X(\cdot;\x)$ and $X^+(\cdot;\x)$ respectively. By \cite{BL07}, the laws of $X(t;\x)$ and $X^+(t;\x)$ converge in total variation to those of $X(\infty)$ and $X^+(\infty)$ respectively.
Consequently, $\|X(\infty)\|_{1}$ is stochastically dominated by $\sum_{i,j}(R^{-1})_{ij}X^+_j(\infty)$ and $\|X(\infty)\|_{\infty,\widetilde{v}}$ is stochastically dominated by $b(\Theta)\|X^+(\infty)\|_{\infty,\widetilde{v}}$. As $X_i^+(\infty)$ is the stationary distribution of a one-dimensional reflected Brownian motion with drift $-\widetilde{v}_i$ and variance $\sigma_i^2$, it is a standard fact that $X_i^+(\infty)$ follows an exponential distribution with mean $\sigma_i^2/(2\widetilde{v}_i) = \sigma_i a(\Theta)/2$. This implies
\begin{align*}
\mathbb{E}\left(\|X(\infty)\|_{1}\right) \le \mathbb{E}\left(\sum_{i,j}(R^{-1})_{ij}X^+_j(\infty)\right) &=\frac{a(\Theta)}{2}\sum_{i,j}(R^{-1})_{ij}\sigma_j,\\
\mathbb{E}\left(\|X(\infty)\|_{1}\right)^2 \le \mathbb{E}\left(\sum_{i,j}(R^{-1})_{ij}X^+_j(\infty)\right)^2 &\le \frac{d(a(\Theta))^2}{2}\left(\sum_{i,j}(R^{-1})^2_{ij}\right)\left(\sum_{j=1}^d\sigma_j^2\right)
\end{align*}
and
$$
\mathbb{P}\left(\|X^+(\infty)\|_{\infty,\widetilde{v}} > t\right) \le \sum_{i=1}^d \mathbb{P}\left(X_i^+(\infty) > a(\Theta)\sigma_it\right) = de^{-2t}.
$$
Using the above estimates we have for any  $D_2 \ge 9$,
\begin{multline*}
\mathbb{E}\left(e^{9(D_2b(\Theta))^{-1}\|X(\infty)\|_{\infty,\widetilde{v}}}\right) \le \mathbb{E}\left(e^{9D_2^{-1}\|X^+(\infty)\|_{\infty,\widetilde{v}}}\right) \le \int_0^{\infty}\mathbb{P}\left(\|X^+(\infty)\|_{\infty,\widetilde{v}} > D_2\log t/9\right)\operatorname{dt}\\
 \le  1 + d\int_1^{\infty}e^{-2D_2\log t/9}\operatorname{dt} \le 1 + d\int_1^{\infty}t^{-2}\operatorname{dt} = 1 + d.
\end{multline*}
Now fix $D_2 = \max\{A_0, 9\}$. Using the above estimates and \eqref{optx} with $A = D_2b(\Theta)$ (noting that $b(\Theta) \ge 1$), we obtain for any $\x \in \mathbb{R}^d_+$ and $t \ge t_0 \left(1 + (a(\Theta))^2\log(2d)\right)$,
\begin{align*}
&\mathbb{E}(\|X(t;\x) - X(t; \mathbf{X}(\infty))\|_1) \le \mathbb{E}(\|X(t;\x) - X(t; \0)\|_1) + \mathbb{E}(\|X(t; \mathbf{X}(\infty)) - X(t; \0)\|_1)\\
&\le  2\|\x\|_1\left(2e^{-\frac{D_1t}{R_1(\Theta,d)}} + e^{-\frac{t}{16D_2R_2(\Theta)}}\right)
+ 2\|\x\|_1e^{3(D_2b(\Theta))^{-1}\|\x\|_{\infty,\widetilde{v}}}e^{-\frac{t}{8D_2R_2(\Theta)}}\\
& \qquad + 2\mathbb{E}\left(\|X(\infty)\|_1\right)\left(2e^{-\frac{D_1t}{R_1(\Theta,d)}} + e^{-\frac{t}{16D_2R_2(\Theta)}}\right) + 2\mathbb{E}\left(\|X(\infty)\|_1e^{3(D_2b(\Theta))^{-1}\|X(\infty)\|_{\infty,\widetilde{v}}}\right)e^{-\frac{t}{8D_2R_2(\Theta)}}\\
&\le 2\|\x\|_1\left(2e^{-\frac{D_1t}{R_1(\Theta,d)}} + e^{-\frac{t}{16D_2R_2(\Theta)}}\right)
+ 2\|\x\|_1e^{3(D_2b(\Theta))^{-1}\|\x\|_{\infty,\widetilde{v}}}e^{-\frac{t}{8D_2R_2(\Theta)}}\\
& \qquad \qquad + 2\mathbb{E}\left(\|X(\infty)\|_1\right)\left(2e^{-\frac{D_1t}{R_1(\Theta,d)}} + e^{-\frac{t}{16D_2R_2(\Theta)}}\right)\\
&\qquad \qquad \qquad + 2\left(\mathbb{E}\left(\|X(\infty)\|_1^2\right)\right)^{1/2}\left(\mathbb{E}\left(e^{9(D_2b(\Theta))^{-1}\|X(\infty)\|_{\infty,\widetilde{v}}}\right)\right)^{1/2}e^{-\frac{t}{8D_2R_2(\Theta)}}\\ 
&\le 2\|\x\|_1\left(2e^{-\frac{D_1t}{R_1(\Theta,d)}} + e^{-\frac{t}{16D_2R_2(\Theta)}}\right)
+ 2\|\x\|_1e^{3(D_2b(\Theta))^{-1}\|\x\|_{\infty,\widetilde{v}}}e^{-\frac{t}{8D_2R_2(\Theta)}}\\
& \qquad \qquad + a(\Theta)\sum_{i,j}(R^{-1})_{ij}\sigma_j\left(2e^{-\frac{D_1t}{R_1(\Theta,d)}} + e^{-\frac{t}{16D_2R_2(\Theta)}}\right)\\
&\qquad \qquad \qquad + a(\Theta)\left[2d(1+d)\left(\sum_{i,j}(R^{-1})^2_{ij}\right)\left(\sum_{j=1}^d\sigma_j^2\right)\right]^{1/2}e^{-\frac{t}{8D_2R_2(\Theta)}}
\end{align*}
which proves the Wasserstein bound in the theorem upon noting that $\|\x\|_{\infty,\widetilde{v}} = (a(\Theta))^{-1}\|\x\|_{\infty}^*$. This, in turn, implies the stated bound on the relaxation time.
\hfill \qed

\begin{rem}\label{bc}
To obtain the better bound displayed in Remark \ref{betterconst}, note that we can replace the bound in Lemma \ref{wassest} by
$$
\mathbb{E}(\|X(t;\x) - X(t; \0)\|_1) \le 2\|\x\|_1\left(e^{-\frac{\delta' t}{128 T(v)}} + e^{-\frac{\Lambda(v)t}{16A}} + e^{-\frac{\Lambda(v)t}{8A}}e^{3A^{-1}\|\x\|_{\infty,v}} + 2^{-\frac{\delta't}{4n(R)T(v)}} \right)
$$
which follows from the second-to-last inequality in the calculation \eqref{concal}. The vector $\widetilde{v}$ in the proof of Theorem \ref{wasthm} still optimizes the above bound over all $v$ and leads to the bound displayed in Remark \ref{betterconst}.
\end{rem}

$\,$\\
{\bf Acknowledgements.}  SB was partially supported by a Junior Faculty Development Award made by UNC, Chapel Hill. 
Research of AB was partially supported by the National Science
Foundation (DMS- 1814894, DMS-1853968). We thank Kavita Ramanan and Andrey Sarantsev for  helpful discussions. We also thank two anonymous referees whose comments and suggestions helped to improve the article.

\bibliographystyle{plain}
%\bibliography{NSFbib,bibl,scaling}

\end{document}